\newcommand{\CA}{\mbox{$\mathsf{CA}$}}
\newcommand{\PHF}{\mbox{$\mathsf{PHF}$}}
\newcommand{\PHFN}{\mbox{$\mathsf{PHFN}$}}
\newcommand{\PHHF}{\mbox{$\mathsf{PHHF}$}}
\newcommand{\HF}{\mbox{$\mathsf{HF}$}}
\newcommand{\DHF}{\mbox{$\mathsf{DHF}$}}
\newcommand{\DHHF}{\mbox{$\mathsf{DHHF}$}}
\newcommand{\HHF}{\mbox{$\mathsf{HHF}$}}
\newcommand{\A}{\mbox{$\mathsf{A}$}}
\newcommand{\B}{\mbox{$\mathsf{B}$}}
\newcommand{\C}{\mbox{$\mathsf{C}$}}
\newtheorem{theorem}{Theorem}
\newtheorem{lemma}[theorem]{Lemma}
\journal{Theoretical Computer Science}
\date{}
\begin{document}
	
	\begin{frontmatter}
		
		\title{Distributing Hash Families with Few Rows}

\author[asu]{Charles J. Colbourn\fnref{nsf}} \ead{charles.colbourn@asu.edu}
\author[asu]{Ryan E. Dougherty} \ead{redoughe@asu.edu}
\author[monash]{Daniel Horsley\fnref{arc}} \ead{daniel.horsley@monash.edu}
\fntext[nsf]{Research supported in part by the U.S. National Science Foundation under Grant No. 1421058.}
\fntext[arc]{Research supported in part by the Australian Research Council through grants DP150100506 and FT160100048.}

\address[asu]{Computing, Informatics, and Decision Systems Engineering,
Arizona State University,
P.O. Box 878809,
Tempe, AZ 85287,
U.S.A.}

\address[monash]{School of Mathematical Sciences, Monash University, Vic 3800, Australia}

\begin{abstract}
		Column replacement techniques for creating covering arrays rely on the construction of perfect and distributing hash families with few rows, having as many columns as possible for a specified number of symbols.
		To construct distributing hash families in which the number of rows is less than the strength, we examine a method due to Blackburn and extend it in three ways.
		First, the method is generalized from homogeneous hash families (in which every row has the same number of symbols) to heterogeneous ones.
		Second, the extension treats distributing hash families, in which only separation into a prescribed number of parts is required, rather than perfect hash families, in which columns must be completely separated.
		Third, the requirements on one of the main ingredients are relaxed to permit the use of a large class of distributing hash families, which we call fractal.
		Constructions for fractal  perfect and distributing hash families are  given, and applications to the construction of   perfect hash families of large strength are developed.
\end{abstract}
		
		\begin{keyword}
			distributing hash family, perfect hash family, fractal  hash family, covering.
		\end{keyword}
		
	\end{frontmatter}
	

\section{Introduction}

A {\em  hash family} $\HF(N; k, w)$ is an $N \times k$ array whose entries are from a set of cardinality $w$.
Each row can be viewed as a hash function from a domain of size $k$ (the column indices) to a range of size $w$ (the symbols)  \cite{mehlhorn-phf}.
More generally, a {\em heterogeneous  hash family}, $\HHF(N; k, (w_1,\dots,w_N))$, is an $N \times k$ array in which the $i$th row contains (at most) $w_i$ symbols for $1 \leq i \leq N$; in other words, the ranges of the $N$ hash functions can differ.
Often we write $(w_1,\dots,w_N)$ in exponential notation: $w_1^{u_1} \cdots w_c^{u_c}$ means that the $N = \sum_{i=1}^c u_i$ rows can be partitioned into classes, so that in the $i$th class there are $u_i$ rows each employing (at most) $w_i$ symbols.

Let $\A$ be a heterogeneous hash family $\HHF(N; k, (w_1,\dots,w_N))$ and let $\{C_1,\dots,C_p\}$ be a partition into $p$ classes (some possibly empty) of some $t$-set of columns of $\A$. A row $r$ of $\A$ {\em separates} this partition if, for any two columns $c$ and $c'$ of $\A$ that are in distinct classes of the partition, the symbols in cells  $(r,c)$ and $(r,c')$ are distinct. If every partition of $t$ columns of $\A$ into $p$ classes is separated by some row, then $\A$ is a {\em distributing heterogeneous hash family} \DHHF$(N;k,(w_1,\dots,w_N),t,p)$.

A \DHHF$(N;k,(w_1,\dots,w_N),t,t)$ is a {\em perfect heterogeneous  hash family} \PHHF$(N;k,(w_1,\dots,w_N),t)$, and a \DHF$(N;k,w,t,t)$ is a {\em perfect   hash family} \PHF$(N;k,w,t)$.
The {\em perfect hash family number} \PHFN$(k,w,t)$ is the smallest $N$ for which a \PHF$(N;k,w,t)$ exists.

Mehlhorn \cite{mehlhorn-phf} introduced perfect hash families as an efficient tool for compact storage and fast retrieval of frequently used information. Stinson, Trung, and Wei \cite{STW} established that perfect hash families can be used to construct separating systems, key distribution patterns, group testing algorithms, cover-free families, and secure frameproof codes; see also \cite{BonehShaw00,StinsonWei98}.
Perfect hash families have also been applied in broadcast encryption \cite{Chor00,phf-broadcast} and threshold cryptography \cite{phf-threshold}.
An older survey on $\PHF$s is given in \cite{Czech97};
for recent results on the existence of perfect hash families see \cite{CLrechf,MTphf,WCphf} and references therein.
Perfect hash families arise as ingredients in some recursive constructions for covering arrays; this is our motivation for considering them here.

Let $N$, $k$, $t$, and $v$ be positive integers, and let $\C$ be an $N \times k$ array with entries from an alphabet $\Sigma$ of size $v$. We say $\C$ is a {\em covering array} \CA$(N;t,k,v)$ of {\em strength} $t$ if, for every $t$-subset $C$ of columns of $\C$ and for every function $f: C \rightarrow \Sigma$, there is a row $r$ of $\C$ such that the symbol in cell $(r,c)$ is $f(c)$ for each $c \in C$.

Covering arrays can be made via a method called column replacement, using hash families to select columns.  The basic result of this type is:

\begin{theorem}\label{phfcr} {\rm \cite{BS,MTvT}}
\label{phf}
If a $\PHF(M;\ell,k,t)$ and a $\CA(N;t,k,v)$ both exist then a $\CA(MN;t,\ell,v)$ exists. Indeed, when the $\CA(N;t,k,v)$ has $\rho$ constant rows, a $\CA(\rho+M(N-\rho);t,\ell,v)$ exists.
\end{theorem}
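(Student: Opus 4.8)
The plan is to realize the claimed array by the \emph{column replacement} operation that gives the construction its name, and then to verify the strength-$t$ coverage property directly. Write the $\PHF(M;\ell,k,t)$ as an $M\times\ell$ array $P$ with entries in $\{1,\dots,k\}$, and the $\CA(N;t,k,v)$ as an $N\times k$ array $A$ with entries in an alphabet $\Sigma$ of size $v$. I would index the rows of the output array $B$ by pairs $(i,j)$ with $1\le i\le M$ and $1\le j\le N$, and define its entry in row $(i,j)$ and column $c$ (for $1\le c\le\ell$) by
\[
B[(i,j),c]=A[j,P[i,c]].
\]
In words, row $i$ of $P$ tells us which column of $A$ to copy into each of the $\ell$ output columns, and we stack all $N$ rows of $A$ under each of the $M$ choices of $i$. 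This produces $MN$ rows over $\Sigma$, as required for the first assertion.

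Next I would prove that $B$ has strength $t$. Fix an arbitrary $t$-subset of columns $C=\{c_1,\dots,c_t\}$ and an arbitrary target function $f\colon C\to\Sigma$. Because $P$ is a perfect hash family of strength $t$, there is a row $i$ in which the entries $a_m:=P[i,c_m]$ for $1\le m\le t$ are pairwise distinct; thus $\{a_1,\dots,a_t\}$ is a genuine $t$-subset of the columns of $A$. This distinctness is exactly what lets us transfer the requirement to $A$: define $g(a_m)=f(c_m)$, which is well defined precisely because the $a_m$ are distinct, and invoke the covering property of $A$ to obtain a row $j$ with $A[j,a_m]=g(a_m)$ for all $m$. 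Then $B[(i,j),c_m]=A[j,P[i,c_m]]=A[j,a_m]=f(c_m)$, so the row $(i,j)$ realizes $(C,f)$, and $B$ is a $\CA(MN;t,\ell,v)$.

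For the sharper count I would exploit the constant rows of $A$. If row $j$ of $A$ is constant with symbol $s$, then $B[(i,j),c]=s$ for every $i$ and every column $c$, so the $M$ rows $(1,j),\dots,(M,j)$ of $B$ are identical constant rows. I would therefore discard all but one copy of each such row, removing $(M-1)\rho$ rows in total and leaving $MN-(M-1)\rho=\rho+M(N-\rho)$ rows. To see that coverage survives the deletion, reconsider the realizing row $(i,j)$ produced above for an arbitrary requirement $(C,f)$: if $j$ is a non-constant row of $A$ then $(i,j)$ is retained, while if $j$ is constant with symbol $s$ then $f(c_m)=B[(i,j),c_m]=s$ for all $m$, so $f$ is constant and the single retained copy of row $j$ (itself constant with symbol $s$) still realizes $(C,f)$. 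I expect this verification that deduplication costs no coverage to be the only genuinely delicate point; the remainder is a direct unwinding of the definitions.
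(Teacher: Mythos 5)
Your proof is correct, and it is the standard column replacement argument that the cited sources \cite{BS,MTvT} use and that the paper itself alludes to just before the statement; the paper gives no proof of its own for this theorem, citing it as known. Both halves of your argument — the strength-$t$ verification via the well-defined pullback $g(a_m)=f(c_m)$, and the deduplication of the $M$ identical copies of each constant row with the observation that a requirement realized by a constant row must itself be a constant assignment — are sound, and the row count $MN-(M-1)\rho=\rho+M(N-\rho)$ checks out.
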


Colbourn \cite{Cdistrib} generalized Theorem \ref{phfcr} to employ distributing hash families.

\begin{theorem}\label{general}
Let $k \geq \min(t,v)$.
Suppose that  there exist a \DHF$(M;\ell,k,t,\min(t,v))$ and a \CA$(N;t,k,v)$ having $\rho$ constant rows.
Then a \CA$(\rho+(N-\rho)M;t,\ell,v)$ exists.
\end{theorem}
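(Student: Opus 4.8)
The plan is to use column replacement, exactly as in Theorem~\ref{phfcr}, but to exploit the weaker distributing property in place of perfect separation. Write $\D$ for the given $\DHF(M;\ell,k,t,\min(t,v))$, an $M\times\ell$ array over the symbol set $\{1,\dots,k\}$, and write $\C$ for the $\CA(N;t,k,v)$ over an alphabet $\Sigma$ with $|\Sigma|=v$. For each row $r$ of $\D$ I would form an $N\times\ell$ block $\mathsf{B}_r$ in which column $j$ is taken to be column $\D[r,j]$ of $\C$; stacking $\mathsf{B}_1,\dots,\mathsf{B}_M$ gives an $MN\times\ell$ array over $\Sigma$. To obtain the sharper row count, I would observe that a constant row of $\C$ is mapped by every block $\mathsf{B}_r$ to the same constant row of length $\ell$, so the $\rho$ constant rows need be retained only once rather than $M$ times; deleting the $(M-1)\rho$ duplicates leaves $MN-(M-1)\rho=\rho+(N-\rho)M$ rows. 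Call the resulting array $\A$.

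It remains to verify that $\A$ is a $\CA(\rho+(N-\rho)M;t,\ell,v)$. Fix a $t$-subset $C=\{c_1,\dots,c_t\}$ of columns of $\A$ and a target $f\colon C\to\Sigma$. The essential idea is that $f$ cannot demand more distinctness than $\D$ is obliged to provide: since $|C|=t$ and $|\Sigma|=v$, the fibres of $f$ partition $C$ into at most $\min(t,v)$ nonempty classes (augmented by empty classes if necessary to make exactly $\min(t,v)$ parts). Because $\D$ is a distributing hash family for $t$ columns and $\min(t,v)$ parts, some row $r$ of $\D$ separates this partition, meaning that $\D[r,c]\neq\D[r,c']$ whenever $f(c)\neq f(c')$.

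The key step is then to cover $f$ within the block $\mathsf{B}_r$. Let $S=\{\D[r,c_1],\dots,\D[r,c_t]\}\subseteq\{1,\dots,k\}$ be the (at most $t$) distinct columns of $\C$ selected by row $r$ on $C$. The contrapositive of the separation just obtained says that if two of the $c_i$ map to the same column of $\C$ then they receive the same target, so the assignment $g\colon S\to\Sigma$ given by $g(\D[r,c_i])=f(c_i)$ is well defined. Since $\C$ has strength $t$ and $|S|\le t$, some row $s$ of $\C$ realizes $g$ on the columns in $S$, and this row of $\mathsf{B}_r$ then realizes $f$ on all of $c_1,\dots,c_t$. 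The one point needing care is the row deletion: a constant row of $\C$ realizes only a constant $g$, hence only a constant $f$, so whenever $f$ is nonconstant the witnessing row $s$ is nonconstant and has been retained, while a constant $f$ is covered by the single surviving copy of the appropriate constant row. This shows every interaction is covered.

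I expect the only genuine obstacle to be the well-definedness of $g$, which is precisely where the distributing property is needed and where the hypothesis $k\ge\min(t,v)$ (necessary for $\D$ to exist) is implicitly used; the construction itself and the constant-row bookkeeping are routine adaptations of Theorem~\ref{phfcr}.
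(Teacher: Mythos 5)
The paper itself contains no proof of Theorem \ref{general}: it is quoted from Colbourn \cite{Cdistrib}, so there is no internal argument to compare yours against. Your proof is the standard column-replacement argument---the same method underlying Theorem \ref{phfcr}, with the distributing property invoked exactly where the fibres of $f$ induce a partition into at most $\min(t,v)$ classes (padded with empty classes), and with the well-definedness of $g$ being precisely what separation buys---and it is correct. One sentence should be tightened: when $f$ is constant with value $\sigma$, you cannot assume the ``appropriate constant row'' exists, since the $\rho$ constant rows of $\C$ need not include a constant-$\sigma$ row; instead, observe that whatever row of $\C$ covers the constant assignment $g$ is either nonconstant, hence retained in the block $\mathsf{B}_r$, or constant, in which case its single surviving copy is constant on all $\ell$ columns and covers $f$ directly.
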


Colbourn and Torres-Jim\'enez \cite{CTJ} improved upon Theorem \ref{general} in two ways:
judiciously choosing symbols on which to place the constant rows, and using heterogeneous hash families.

\begin{theorem}\label{hetgen}
Suppose that there exist
\begin{enumerate}
\item a \CA$(N_i;t,k_i,v)$ having $\rho_i$ constant rows and $k_i \geq t$ for $1 \leq i \leq c$, and
\item a \DHHF$(M;\ell,k_1^{u_1} \cdots k_c^{u_c},t,\min(t,v))$.
\end{enumerate}
Let $\chi = \max(0,v-\sum_{i=1}^c u_i(v-\rho_i))$.
Then a \CA$(\chi +\sum_{i=1}^c u_i (N_i - \rho_i) ;t,\ell,v)$ exists.
\end{theorem}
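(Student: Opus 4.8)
The plan is to use column replacement, extending the argument behind Theorems~\ref{phfcr} and~\ref{general} to the heterogeneous setting and then layering on a symbol‑reuse idea for the constant rows. Write $\Sigma$ for the $v$‑symbol alphabet and fix the \DHHF\ $\A$ on $\ell$ columns; write $h_r(c)$ for the symbol in cell $(r,c)$ of $\A$. A row $r$ in the $i$th class takes at most $k_i$ values, which I identify with column indices of a fixed \CA$(N_i;t,k_i,v)$, call it $\B_i$. For each row $r$ of $\A$, in class $i$, I form a \emph{copy}: replace each column $c$ of the target array by column $h_r(c)$ of $\B_i$, but retain only the $N_i-\rho_i$ \emph{non‑constant} rows of $\B_i$. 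Stacking these copies over all $M$ rows of $\A$ uses $\sum_{i=1}^c u_i(N_i-\rho_i)$ rows; I then append $\chi$ further rows, each constant on all $\ell$ columns, one for each of $\chi$ chosen symbols, and claim the result is a \CA$(\chi+\sum_i u_i(N_i-\rho_i);t,\ell,v)$.

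First I would verify coverage of every non‑constant interaction $f\colon C\to\Sigma$ on a $t$‑set $C$. Partition $C$ by the value of $f$; this gives at most $\min(t,v)$ nonempty classes, which I pad to exactly $\min(t,v)$ parts. By the distributing property some row $r$, say in class $i$, separates this partition, so columns receiving distinct $f$‑values receive distinct symbols under $h_r$. Hence $g(h_r(c)):=f(c)$ is a well‑defined interaction on the at‑most‑$t$ distinct columns $\{h_r(c):c\in C\}$ of $\B_i$, and since $f$ is non‑constant so is $g$. As a constant row realizes only a constant interaction, $\B_i$ covers $g$ on some non‑constant row, which I retained; the matching row of the copy realizes $f$. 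This step uses $k_i\ge t$ only to ensure $\B_i$ has enough columns.

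The substance of the theorem is the $v$ constant interactions. The key observation is that if $f\equiv s$ then its partition of $C$ has a single nonempty class, so \emph{every} row of $\A$ separates it; thus any copy is free to cover $f$, provided its array realizes $g\equiv s$ on a non‑constant row. Relabeling the symbols of $\B_i$ is a permutation of $\Sigma$ that preserves both the covering‑array property and the set of constant rows, so for each copy I may choose which $\rho_i$ symbols its constant rows use, equivalently which $v-\rho_i$ symbols its non‑constant rows cover. Choosing these sets greedily across the $u_i$ copies in each class lets the union of freely covered symbols attain its maximum size $\min\!\bigl(v,\sum_i u_i(v-\rho_i)\bigr)$; every symbol in this union then has its constant interaction covered by a retained non‑constant row of some copy, and the remaining $v-\min\!\bigl(v,\sum_i u_i(v-\rho_i)\bigr)=\chi$ symbols are handled by the appended constant rows.

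The main obstacle is exactly this constant‑interaction bookkeeping. I must check that relabeling leaves non‑constant coverage intact, which it does since permuting $\Sigma$ merely permutes the non‑constant interactions, each still matched by a non‑constant row; and that a single per‑class choice of blocked symbols simultaneously serves all $t$‑sets $C$, which it does because separation is automatic for constant $f$ and, once $s$ avoids the constant rows of $\B_i$, that array covers $g\equiv s$ on every admissible column subset by a non‑constant row. Granting these, the row count is $\chi+\sum_{i=1}^c u_i(N_i-\rho_i)$, completing the construction.
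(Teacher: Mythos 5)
Your proof is correct, and it takes essentially the route the paper itself indicates: Theorem~\ref{hetgen} is stated there without proof, quoted from Colbourn and Torres-Jim\'enez \cite{CTJ}, whose method is exactly the column-replacement scheme with constant-row bookkeeping that you reconstruct. Your two key steps---padding the partition induced by $f$ to $\min(t,v)$ classes so the \DHHF\ property applies (with $k_i \geq t$ used to extend interactions on fewer than $t$ columns of $\B_i$), and relabeling symbols per copy so that the retained non-constant rows cover constant interactions for $\min\bigl(v,\sum_{i=1}^c u_i(v-\rho_i)\bigr)$ symbols, leaving $\chi$ appended constant rows---are precisely the ``judicious choice of symbols on which to place the constant rows'' and the use of heterogeneity that the paper credits to \cite{CTJ}.
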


Effective applications of Theorems \ref{phfcr}, \ref{general}, and \ref{hetgen}  require that both the covering arrays and the hash families employed have `few' rows.
In this paper, we focus on the numbers of rows in the distributing hash families used.

\section{Linear Bounds on Numbers of Columns}

A method of Blackburn \cite{blackburn2000} establishes:

\begin{lemma}\label{easy}
For positive integers $a_1, \dots a_t$, set  $\tau = \prod_{i=1}^t a_i$, and  $b_i = \frac{\tau}{a_i}$ for $1 \leq i \leq t$.
A \PHHF$(t;\tau,(b_1,\dots,b_t),t)$ exists in which every set of $1 \leq \ell \leq t$ columns is separated in at least $t+1-\ell$ rows.
\end{lemma}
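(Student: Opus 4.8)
The plan is to realize the required family as a ``coordinate-deletion'' construction on a grid and then bound the number of failing rows by a forest argument. First I would index the $\tau = \prod_{i=1}^t a_i$ columns by the points of the grid $\prod_{i=1}^t \{0,1,\dots,a_i-1\}$, so each column is a vector $\mathbf{x}=(x_1,\dots,x_t)$. For the $i$th row I would use the hash function $h_i$ that deletes coordinate $i$, sending $\mathbf{x}$ to $(x_1,\dots,x_{i-1},x_{i+1},\dots,x_t)$. The image of $h_i$ has size $\prod_{j\neq i} a_j = \tau/a_i = b_i$, so row $i$ uses at most $b_i$ symbols, as needed for a $\PHHF(t;\tau,(b_1,\dots,b_t),t)$. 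Under this construction two distinct columns receive the same symbol in row $i$ exactly when they agree in every coordinate other than $i$; equivalently, row $i$ separates a set $S$ of columns if and only if no two members of $S$ differ only in coordinate $i$.

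The heart of the argument is to show that, for any set $S$ of $\ell$ columns, at most $\ell-1$ rows fail to separate $S$; the claimed bound of $t-(\ell-1)=t+1-\ell$ separating rows then follows at once, and taking $\ell=t$ recovers the perfect hash family property. Let $F$ be the set of indices $i$ for which row $i$ fails to separate $S$. For each $i\in F$ I would select one pair of columns in $S$ differing only in coordinate $i$, and build an edge-labelled graph $\Gamma$ on vertex set $S$ by adding, for each $i\in F$, this chosen pair as an edge labelled $i$. Then $\Gamma$ has $|F|$ edges carrying pairwise distinct labels, so it suffices to prove that $\Gamma$ is a forest, which forces $|F|\le \ell-1$.

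To see that $\Gamma$ is acyclic I would argue by contradiction along any cycle. Traversing a cycle and returning to the start, every coordinate must undergo a net change of zero. But the value of coordinate $i$ at a vertex changes only when an edge labelled $i$ is crossed, and since the labels on a cycle are distinct, any fixed label $i$ appears on at most one edge of the cycle. If that edge lies on the cycle then crossing it does change coordinate $i$, while no other edge of the cycle alters coordinate $i$; hence the net change in coordinate $i$ around the cycle is nonzero, a contradiction. (The same reasoning rules out parallel edges, i.e.\ cycles of length two.) Therefore $\Gamma$ contains no cycle and is a forest on $\ell$ vertices.

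I expect the acyclicity step to be the main obstacle: the grid indexing and the symbol count are routine, but choosing the right edge-labelled graph and verifying that distinctness of labels forces a forest is where the real content lies. The remaining bookkeeping is immediate, including the boundary case $\ell=1$, where separation is vacuous and a single column is trivially separated by all $t$ rows.
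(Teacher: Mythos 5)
Your proposal is correct and takes essentially the same approach as the paper: the identical coordinate-deletion construction on the grid $\prod_{i=1}^t \{0,\dots,a_i-1\}$, followed by the same edge-labelled graph on the chosen columns, where distinctness of labels together with the fact that an edge labelled $i$ changes only coordinate $i$ forces acyclicity. The only cosmetic difference is framing: you prove directly that the graph is a forest (so at most $\ell-1$ rows fail to separate), whereas the paper assumes at least $\ell$ failing rows, extracts a cycle from the resulting $\ell$ edges, and derives the same coordinate-change contradiction along that cycle.
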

\begin{proof}
Form a $t \times \tau$ array $A$, indexing columns by $\{ 1 , \dots a_1 \} \times \cdots \times \{ 1 , \dots a_t \}$.
Form row $j$ by ensuring that two columns contain the same symbol if and only if their indices agree in all coordinates other than the $j$th coordinate. Suppose to the contrary that for some $1 \leq \ell \leq t$, at most $t-\ell$ rows separate the $\ell$ columns $c_1,\dots,c_\ell$.
Form an edge-coloured  graph $G$ on vertex set $\{c_1,\dots,c_\ell\}$; for each row $r$ that does not separate the $\ell$ columns, place an edge of colour $r$ between two column indices whose columns contain the same symbol in row $r$.
Then $G$ has $\ell$ vertices and at least $\ell$ edges each having a different colour.
So $G$ contains a cycle $(x_0,\dots,x_{s-1})$ for some $s \leq \ell$.
Suppose that edge $\{x_0,x_{s-1}\}$ has colour $r$.
Then the columns indexed by $x_0$ and $x_{s-1}$ are the same in all rows other than $r$ but differ in row $r$.
On the other hand, for $0 \leq i < s-1$, edge $\{x_i,x_{i+1}\}$ does not have colour $r$, and hence the columns agree in row $r$.
This is a contradiction because the columns indexed by $x_0$ and $x_{s-1}$ must both agree and disagree in row $r$.
\end{proof}

Lemma \ref{easy} produces a  $\PHF(t;a^t,a^{t-1},t)$ and hence a   $\DHF(t;a^t,a^{t-1},t,p)$ for every $a \geq 2$ and $t \geq p$.
Hence the maximum number of columns grows superlinearly in the number of symbols for \DHHF s with $t \geq p \geq 2$ whenever the number of rows is at least $t$.
We are primarily interested in cases where the number of rows is less than the strength $t$.
In these cases, the number of columns cannot exceed a linear function of the number of symbols:

\begin{lemma}\label{btvt}
Let $t \geq p \geq 2$ and $t > n$.  If a \DHHF$(n;k,(w_1, \dots, w_n) ,t,p)$ exists, $k \leq \sum_{i=1}^n w_i$.
\end{lemma}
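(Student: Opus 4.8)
The plan is to argue by contradiction. Assuming a \DHHF$(n;k,(w_1,\dots,w_n),t,p)$ exists with $t>n$, $p\ge 2$, and $k>\sum_{i=1}^n w_i$, I will produce a $t$-set of columns together with a partition of it into $p$ classes that no row separates, contradicting the defining property. Throughout I assume $k\ge t$, as is standard here: for $k<t$ there are no $t$-subsets and the separation condition imposes nothing, so the statement is to be read in the regime $k\ge t$.

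First I would isolate the right dichotomy among the columns. For each row $i$, call a column \emph{solitary in row $i$} if it is the unique column carrying its symbol in that row, and let $U_i$ denote the set of such columns. Columns in $U_i$ carry pairwise distinct symbols, and row $i$ uses at most $w_i$ symbols, so $|U_i|\le w_i$. Hence $\sum_{i=1}^n|U_i|\le\sum_{i=1}^n w_i<k$, so $\bigcup_{i=1}^n U_i$ cannot exhaust the $k$ columns. Therefore some column $z$ is solitary in \emph{no} row, meaning that for every row $i$ there is a column $x_i\ne z$ sharing the symbol of $z$ in row $i$.

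Then I would build the forbidden configuration as a ``star'' centred at $z$. Set $S=\{z\}\cup\{x_1,\dots,x_n\}$; since the $x_i$ need not be distinct, $|S|\le n+1\le t$, so $S$ can be padded with further columns (available because $k\ge t$) to a $t$-set $T$. Partition $T$ into $p$ classes by placing $z$ in one class and every other column of $T$ (the $x_i$ together with the padding) in a second class, leaving the remaining $p-2\ge 0$ classes empty. For each row $i$, the columns $z$ and $x_i$ lie in different classes yet share a symbol, so row $i$ fails to separate this partition; as this holds for all $n$ rows, no row separates $T$. This contradicts $T$ being separated, forcing the conclusion that every column is solitary in some row and hence $k=\bigl|\bigcup_i U_i\bigr|\le\sum_{i=1}^n w_i$.

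The conceptual heart of the argument is the dichotomy itself: the columns that are ``pinned down'' by being solitary in some row are few (at most $\sum w_i$ of them, by the clean per-row count $|U_i|\le w_i$), while a single column solitary in no row already spawns an unseparable star. The main subtlety to get right is the two-class colouring of the star --- placing $z$ alone opposite all of its partners simultaneously makes \emph{every} row fail --- together with the boundary bookkeeping: one must use $t>n$ to guarantee $|S|\le n+1\le t$ and $k\ge t$ to pad $S$ up to a genuine $t$-subset. Neither step involves heavy computation, so I expect no serious obstacle once the notion of a solitary column is in hand.
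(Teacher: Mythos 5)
Your proof is correct, but it takes a genuinely different route from the paper's. The paper proves Lemma~\ref{btvt} by induction on $n$, adapting an argument of Bazrafshan and Tran van Trung \cite{BazrafshanTvT}: reduce to $p=2$, pick a row with fewest symbols (say $w_1$), choose $m\leq w_1$ columns covering every symbol occurring in that row, and delete both the row and the chosen columns; one then shows the remaining $(n-1)\times(k-m)$ array is a \DHHF\ of strength $t-1$ on two parts, and induction yields the count. Your argument is instead direct: count the columns that are singletons in some row (at most $\sum_{i} w_i$ of them), and show that any column $z$ which is a singleton in no row spawns a star $\{z\}$ versus $\{x_1,\dots,x_n\}$, padded to a $t$-set and split into two classes (plus $p-2$ empty ones, which the definition permits), that no row separates. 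This is essentially Blackburn's singleton-array argument, which the paper itself deploys immediately after this lemma: your proof amounts to the $d=1$ case of Lemma~\ref{blackcov} combined with the counting remark that follows it. What each buys: the paper's induction is self-contained and extends naturally to its heterogeneous bookkeeping, while yours is shorter, avoids induction entirely, and generalizes at once to the stronger conclusion of Lemma~\ref{blackcov} (a row of simultaneous singletons on every $d$-set when $p\geq d+1\geq 2$). One shared caveat: both proofs require $k\geq t$ --- you state this explicitly, while the paper's base case $n=1$ (``any repetition prevents the array from being a \DHHF'') uses it tacitly --- since under the paper's definition the distributing condition is vacuous when $k<t$, and the stated inequality can then fail.
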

\begin{proof}
We adapt an argument from \cite{BazrafshanTvT}.
Let ${\bf w} = (w_1, \dots, w_n)$.
When $p \geq 3$, a \DHHF$(n;k,{\bf w},t,p)$ is also a \DHHF$(n;k,{\bf w}, t,p-1)$, so consider a \DHHF$(n;k,{\bf w},t,2)$, say $\A$.
If $n=1$, any repetition in the single row prevents the array from being a \DHHF; hence $k \leq w_1$.
Otherwise choose a row having the fewest symbols, say without loss of generality the first row having $w_1$ symbols.
Choose $m \leq w_1$ columns so that in the first row, every symbol that occurs among the columns not chosen also appears among the columns chosen.
By deleting the first row, and the $m$ chosen columns, we obtain an $(n-1) \times (k-m)$ array $\B$ that we claim is a \DHHF$(n-1;k-m,(w_2, \dots, w_n),t-1,2)$. Provided this claim holds, the lemma follows by induction.

Suppose otherwise that there is a partition into 2 parts $\{C_1,C_2\}$ of some $(t-1)$-set of columns of $\B$ that is not separated by any row of $\B$. Let $\sigma$ be a symbol that appears in one of the columns in $C_1$ in the first row of $\A$, and let $c$ be one of the chosen columns that contains $\sigma$ in the first row of $\A$. Then no row of $\A$ separates the partition $\{C_1,C_2 \cup\{c\}\}$, a contradiction.
\end{proof}

Lemma \ref{btvt} can be often improved upon, by  adapting a method of Blackburn \cite{blackburn2000} for perfect hash families to treat \DHHF s when the number of parts is large enough.
Let $\A$ be a  \DHHF$(n;k,(w_1,\dots,w_n),n+d,p)$  with $d \geq 1$.
Call a cell a {\em singleton} if the symbol it contains does not occur anywhere else in its row.
Form an $n \times k$ matrix $\B$, the {\em singleton array} of $\A$, setting the entry in row $r$ and column $c$ equal to 1 if the cell $(r,c)$ of $\A$ is a singleton, and equal to 0 otherwise.

\begin{lemma}\label{blackcov}
Let $\B$ be the singleton array of a \DHHF$(n;k,(w_1,\dots,w_n),n+d,p)$, $\A$, with $p \geq d+1 \geq 2$.
Then for every $d$-set of  columns, $\{c_1, \dots, c_d\}$,  of $\B$, there is at least one row $r$ of $\B$ in which the entry in row $r$ and column $c_i$ equals $1$ for all $1 \leq i \leq d$.
\end{lemma}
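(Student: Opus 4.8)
The plan is to prove the contrapositive by building, from a putative counterexample, one partition of an $(n+d)$-set of columns that no row of $\A$ separates, contradicting the distributing property (recall $\A$ has strength $t = n+d$).

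First I would negate the conclusion: assume there is a $d$-set $\{c_1,\dots,c_d\}$ of columns of $\B$ such that no row has entry $1$ in all of $c_1,\dots,c_d$ simultaneously. By the definition of the singleton array, this means that in each of the $n$ rows of $\A$ at least one of the columns $c_1,\dots,c_d$ is a non-singleton, i.e.\ its symbol in that row is repeated by some other column of the same row.

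Next, for each row $r$ I would fix a witness to this repetition: an index $i(r)$ with $c_{i(r)}$ a non-singleton in row $r$, together with a column $e_r \neq c_{i(r)}$ that carries the same symbol as $c_{i(r)}$ in row $r$. Ranging over the $n$ rows produces at most $n$ witness columns, so the set $\{c_1,\dots,c_d\}\cup\{e_1,\dots,e_n\}$ contains at most $n+d$ columns; I would pad it to a set $S$ of exactly $n+d$ columns (possible in the meaningful regime $k \ge n+d$, where the distributing condition has content). I would then partition $S$ into the singleton parts $\{c_1\},\dots,\{c_d\}$ together with one further part collecting every remaining column of $S$, leaving any additional parts empty. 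This uses $d+1$ parts, which is admissible precisely because $p \ge d+1$.

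It remains to verify that no row separates this single partition. For each row $r$ the pair $c_{i(r)},e_r$ collides in row $r$, and these two columns lie in distinct parts: $c_{i(r)}$ occupies its own singleton part, while $e_r$ lies either in its own singleton part (if $e_r$ is some $c_j$ with $j\neq i(r)$) or in the collecting part (otherwise). Thus every row fails to separate the partition, the desired contradiction. The main obstacle, and the reason the hypotheses $t=n+d$ and $p\ge d+1$ both enter, is the simultaneous bookkeeping: the $n$ rows can contribute at most $n$ new witness columns, keeping the total within the strength $n+d$, while the $d$ singleton parts plus one collecting part stay within the $p$ available parts. The one slightly delicate point is that a single fixed partition is defeated by all rows at once, rather than needing a different partition per row; this works because padding $S$ and lumping all non-$c_i$ columns into one part can only create cross-part collisions, never destroy the collisions $\{c_{i(r)},e_r\}$ already present.
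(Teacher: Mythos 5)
Your proof is correct and follows essentially the same approach as the paper: negate the conclusion, choose for each row a colliding pair consisting of a non-singleton column of the $d$-set and a witness column sharing its symbol, and build a single unseparated partition with singleton classes for the $c_i$'s plus one collecting class, contradicting the \DHHF\ property at strength $n+d$ with $p \geq d+1$ parts. The only differences are cosmetic: you give every $c_i$ its own class and explicitly pad to an $(n+d)$-set with empty classes allowed, whereas the paper assigns singleton classes only to the columns of $C$ actually used as witnesses and leaves the padding implicit.
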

\begin{proof}
Suppose to the contrary that $\A$ is a \DHHF$(n;k,(w_1,\dots,w_n),n+d,p)$ with $p \geq d+1$, $\B$ is its singleton array, and for columns  $C= \{c_1, \dots, c_d\}$, no row of $\B$ has the $d$ entries in these columns all equal to 1.
For each row $r = 1,\dots,n$, there is a column $c_r \in C$ so that the entry of $\A$ in row $r$ and column $c_r$ is not a singleton.
Then let $d_r \neq c_r$ be a column index so that in row $r$, the entries of $\A$ in columns $c_r$ and $d_r$ are the same.
Now we form a partition of at most $n+d$ columns of $\A$ into at most $d+1$ classes that is not separated by any row of $\A$.
First, for every $c \in C$ such that $c=c_r$ for some $r$, form a class containing just the column index $c$.
Next, form a class  $\{d_r: 1 \leq r \leq n\} \setminus \{c_r: 1 \leq r \leq n\}$.
It follows that $c_r$ and $d_r$ are in different classes for each $1 \leq r \leq n$, so no row accomplishes this separation.
Because $|C| = d$,
we have chosen a partition of at most $n+d$ columns of $\A$ into at most $d+1$ classes, and hence
we have the required contradiction.
\end{proof}

If one singleton from each column of a \DHHF$(n;k,(w_1,\dots,w_n),n+d,p)$ can be identified, then $k$ singletons are identified and the number of identified singletons in row $r$ is at most $w_r$ for $1 \leq r \leq n$. Using this argument it can be seen that Lemma \ref{blackcov} improves on Lemma \ref{btvt} when $p \geq d+1 \geq 2$.

For certain parameters, a stronger conclusion can be obtained via the following argument.
Form a multigraph $G$ on vertex set $\{ c_r, d_r : 1 \leq r \leq n\}$ with edges $\{ \{ c_r, d_r\} : 1 \leq r \leq n\}$.
When $G$ can be properly coloured with $\gamma$ colours, the array $\A$ cannot be a \DHHF$(n;k,v,n+d,\gamma)$.
When on the columns $\{c_1, \dots, c_d\}$ some rows contain multiple entries that are not singletons, we may be able to choose $\{ c_r, d_r\}$ for certain values of $r$ in more than one way, and hence choose $G$ so as to reduce the chromatic number of $G$.

\section{Fractal Hash Families}

Later we describe a construction for \DHHF s with a number of rows smaller than the strength, which uses ingredient hash families that are required to satisfy an additional constraint.
The hash families to be introduced always have a number of rows equal to the strength $t$.
Lemma \ref{easy} produces a  $\PHF(t;a^t,a^{t-1},t)$ whenever $a,t \geq 2$, so the number of columns grows faster than linearly in the number of symbols for a \DHF$(t;k,w,t,p)$ with $t \geq p \ge 2$.
However, the growth of the number of columns is limited.
\begin{theorem}{\rm \cite{NiuC16}}
If a \DHF$(t;k,w,t,p)$ exists then $k \leq w^2$.  Moreover, if $t \geq 4$, $k \leq w^2-w$.
\end{theorem}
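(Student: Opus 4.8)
The plan is first to reduce to the case $p=2$. Since a \DHF$(t;k,w,t,p)$ with $p \ge 2$ is in particular a \DHF$(t;k,w,t,2)$ (as noted in the proof of Lemma~\ref{btvt}, a distributing family for $p$ parts is also one for $p-1$ parts), any upper bound on $k$ proved for $p=2$ transfers to all $p \ge 2$. So I would let $\A$ be a \DHF$(t;k,w,t,2)$, assume $k \ge t$ (otherwise no $t$-set of columns exists and there is nothing to prove), and regard the columns as vectors with $t$ coordinates over an alphabet of size $w$. Being a \DHF$(t;\cdot,w,t,2)$ then means exactly that for every $t$-set of columns and every split into two parts, some row assigns disjoint symbol sets to the two parts.

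Second, I would set up a contrapositive adversary framework. For a set of at most $t$ columns equipped with a $2$-colouring, a given row fails to separate the two colour classes precisely when it contains a \emph{bichromatic agreeing pair}: two columns of opposite colour sharing a symbol in that row. I model this by an agreement multigraph on the chosen columns, placing for each row an edge between two columns that share a symbol there; if this graph is bipartite, a proper $2$-colouring makes every edge bichromatic, and padding the chosen set up to a full $t$-set (putting all extra columns in one part) produces a partition separated by no row, contradicting the \DHF property. The engine is a budget count: if columns $c,c'$ agree in a set $R$ of rows with $|R|\ge 2$, colour them oppositely, so all $|R|$ of those rows are already handled by the pair $\{c,c'\}$; the remaining $t-|R|$ rows must each be handled by attaching one further column sharing a symbol with $c$ or with $c'$, and since $t-|R|\le t-2$ the column budget exactly suffices. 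The only way to block this construction is that, in some row where $c$ and $c'$ disagree, \emph{both} are singletons. Carrying out the count carefully yields the structural lemma I would aim for: if two columns agree in $\rho \ge 2$ rows, then both are singletons in at least $\rho-1$ of the remaining rows (and, when $t \ge 4$, in essentially all of them, since the extra disagreement rows give the adversary more room).

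Third, I would convert this structural lemma into a counting bound by fixing two rows and projecting each column to its pair of symbols there, a map into a set of size $w^2$. Any two columns with the same projection agree in those two rows and hence, by the structural lemma, are singletons in a further row; since each row contains at most $w$ singleton cells, the columns whose projection is non-injective are tightly constrained, while the remaining columns inject into the $w^2$ projection values. The main obstacle will be precisely this final bookkeeping: the naive version of the count only gives $k \le w^2 + O(w)$, and removing the surplus to reach $k \le w^2$ — and then driving it down to $k \le w^2-w$ when $t \ge 4$ — requires exploiting the singleton constraints in all coordinate pairs simultaneously and quantifying how the additional disagreement rows available for $t \ge 4$ force agreeing columns to be singletons in so many rows that the surplus turns into a deficit.
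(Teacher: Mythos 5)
There is an important preliminary point: the paper itself contains no proof of this statement — it is quoted from \cite{NiuC16} — so your proposal has to stand entirely on its own, and it does not. The decisive gap is that your structural lemma is false for $t \geq 4$, which is precisely the regime where the refined bound $k \leq w^2 - w$ is claimed. Consider the $4 \times 4$ array whose columns are $c=(1,1,1,1)$, $c'=(1,1,2,2)$, $x=(2,2,1,2)$, $z=(3,3,3,3)$ (each row uses the symbols $\{1,2,3\}$, so $w=3$). Checking all seven bipartitions of these four columns shows each is separated by some row (for instance $\{c,x\}$ versus $\{c',z\}$ is separated by row $3$, and $\{c,z\}$ versus $\{c',x\}$ by row $4$), so this is a \DHF$(4;4,3,4,2)$. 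The columns $c$ and $c'$ agree in rows $1$ and $2$, so $\rho=2$; yet in row $3$ the cell of $c$ is not a singleton (column $x$ shares its symbol) and in row $4$ the cell of $c'$ is not a singleton (again column $x$). So there is no remaining row in which both are singletons, contradicting your lemma's conclusion that there be at least $\rho-1=1$ such row. What you overlooked is a second way your adversary construction can be blocked: the auxiliary column needed for one row (attached to $c$) and the auxiliary column needed for another row (attached to $c'$) may be forced to be the same column $x$, which would then have to lie on both sides of the bipartition — the agreement graph contains the triangle $\{c,c',x\}$ and is not bipartite. Your claim that ``the only way to block this construction is that, in some row where $c$ and $c'$ disagree, both are singletons'' fails exactly here, and your parenthetical assertion that $t \geq 4$ makes the lemma stronger is backwards: the additional rows are what create room for this conflict obstruction (for $t=3$ the lemma is in fact true). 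What survives is only the weaker statement that each of $c$, $c'$ \emph{individually} is a singleton in at least one remaining row (attach a star centred at a single column, which can never create an odd cycle), but those rows may differ for $c$ and $c'$, as they do in the example above.

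Even granting some corrected singleton lemma, your third step concedes that the projection count yields only $k \leq w^2 + O(w)$, and you defer both the removal of this surplus and the entire improvement to $w^2-w$ for $t \geq 4$ to unspecified further bookkeeping. That bookkeeping is not a routine finish; the exact constants $w^2$ and $w^2-w$ are the whole content of the theorem being cited. Moreover, with the weaker (correct) lemma the situation deteriorates rather than improves: columns colliding under the two-row projection are now only guaranteed to be singletons in possibly \emph{different} remaining rows, so the budget of exceptional columns grows to roughly $(t-2)w$ rather than $w$, widening the surplus. As it stands, the proposal proves neither $k \leq w^2$ nor the $t \geq 4$ refinement.
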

Indeed the growth rate is less than quadratic asymptotically:
\begin{theorem}{\rm \cite{GeChongWang17}}
Let $t \geq 4$ and let $k(w)$ be the largest integer for which a  \DHHF$(t;k(w),w,t,p)$ exists.  Then $k(w)$ is $\mbox{o}(w^2)$.
\end{theorem}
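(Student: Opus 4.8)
The plan is to reduce to the case $p=2$ and then to isolate the combinatorial obstruction that forces the columns to behave almost injectively, before squeezing out the lower-order saving. Since a \DHHF$(t;k,w,t,p)$ is also a \DHHF$(t;k,w,t,2)$ (a $2$-partition being a $p$-partition with empty classes, as already noted after Lemma~\ref{btvt}), the largest feasible $k$ for any $p\geq 2$ is bounded above by its value for $p=2$; so it suffices to treat a \DHF$(t;k,w,t,2)$, which I would regard as an array $\A$ whose columns are distinct vectors in $[w]^t$.

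First I would record the forbidden configurations, reusing the edge-coloured-graph idea of Lemma~\ref{easy}. For two columns $x,y$ write $\delta(x,y)\subseteq\{1,\dots,t\}$ for the set of rows in which they differ; this is nonempty since the columns are distinct. The central observation is that if three columns $a,b,c$ satisfy $\delta(a,b)\cap\delta(a,c)=\emptyset$, then the bipartition placing $a$ opposite $\{b,c\}$ is separated by no row: in each row $a$ agrees with $b$ or with $c$ (as no row can lie in both $\delta(a,b)$ and $\delta(a,c)$), producing a cross-part collision. Padding with $t-3$ arbitrary columns preserves every collision, so this yields an unseparated partition of a $t$-set, contradicting the \DHF\ property. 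Hence for each column $a$ the difference sets $\{\delta(a,x):x\neq a\}$ form a pairwise-intersecting family of nonempty subsets of $\{1,\dots,t\}$. I would then extract quantitative consequences: the ``close'' neighbours of $a$ (those with $|\delta(a,x)|=1$) must all disagree with $a$ in one common row, and a column possessing such a neighbour is forced to contain a singleton in that row, linking back to the singleton arrays of Lemma~\ref{blackcov}.

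Next I would convert this local structure into a global count. Fixing a pair of rows, one maps each column to its projection in $[w]^2$ and uses the intersecting-family constraint to control how the fibres of this map can overlap; a double count over the $\binom{t}{2}$ pairs of rows, weighted by collision multiplicities, is intended to recover the first-order bound $k\leq w^2$ (indeed $w^2-w$ for $t\geq 4$) of the theorem of Niu and Colbourn \cite{NiuC16}. This step should be essentially bookkeeping once the forbidden configuration is in hand.

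The hard part will be upgrading the constant-factor-tight bound $O(w^2)$ to the little-$o$ statement, since no single forbidden configuration can distinguish $c\,w^2$ from $o(w^2)$. Here I would pass to an auxiliary graph or triple system on about $w$ vertices whose edges encode column fibres, arrange that the \DHF\ obstruction forbids precisely the dense configuration ruled out by an extremal removal-type theorem (a Ruzsa--Szemer\'edi $(6,3)$-style statement, equivalently the triangle removal lemma), and conclude that the encoded structure, and hence $k$, has size $o(w^2)$. The delicate point, and the step I expect to resist a clean treatment, is engineering the encoding so that the parity and $2$-colouring nature of the $p=2$ obstruction matches the hypotheses of the extremal input, since the most obvious triangle encoding admits odd configurations that the \DHF\ condition does not forbid. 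A supersaturation-plus-stability argument, showing that any $\A$ with $k\geq\varepsilon w^2$ must contain a triple $a,b,c$ with $\delta(a,b)\cap\delta(a,c)=\emptyset$, is the most promising route to forcing the contradiction for every fixed $\varepsilon>0$.
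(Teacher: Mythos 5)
The first thing to note is that the paper does not prove this statement: it is quoted with the citation \cite{GeChongWang17}, so the only meaningful comparison is with the proof in that reference. Your preliminary analysis is correct as far as it goes. The reduction to $p=2$ is valid (a partition into two classes is a partition into $p$ classes with some classes empty, which the definition permits, so every \DHHF$(t;k,w,t,p)$ is a \DHHF$(t;k,w,t,2)$); the columns of such an array must be pairwise distinct; and your forbidden configuration is genuine: if $\delta(a,b)\cap\delta(a,c)=\emptyset$ then in every row $a$ agrees with $b$ or with $c$, so the bipartition $\{a\}$ versus $\{b,c\}$, padded with $t-3$ further columns, is separated by no row. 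The side observation that a column with a Hamming-distance-one neighbour is forced to carry a singleton in the row of disagreement is also correct.

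However, the proposal has a genuine gap, and it sits exactly where the theorem lives. Everything you actually establish --- the intersecting difference-set families, and the projection ``bookkeeping'' you hope recovers $k\le w^2$ --- is compatible with $k=cw^2$ for a constant $c>0$; the entire content of the theorem is the passage from $O(w^2)$ to $o(w^2)$. For that step you offer a plan, not an argument: you name Ruzsa--Szemer\'edi/(6,3)/removal-lemma technology as the intended input, you correctly observe that no naive count over a single forbidden configuration can yield a little-$o$ bound, and then you concede that the required encoding ``resists a clean treatment'' and that supersaturation-plus-stability ``is the most promising route''. That concession is an acknowledgement that the crucial step is absent, not a proof of it. Your instinct about the tool is sound --- the proof in \cite{GeChongWang17} does derive its $o(w^2)$ bounds from precisely this kind of extremal machinery, which is also why the conclusion is little-$o$ rather than a power saving --- but identifying the right hammer without constructing the reduction leaves the theorem unproven. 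A secondary, lesser issue: your claim that $k\le w^2$ (and $k\le w^2-w$ for $t\ge 4$) follows by routine double counting is asserted rather than shown; that is the theorem of \cite{NiuC16} quoted just above this one in the paper, and it is not mere bookkeeping, though this does not affect the main point since the $o(w^2)$ statement subsumes it.
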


A $\DHHF(t;k,(v_1, \dots,  v_t),t,p)$ is {\em fractal} if $t \leq 2$, or if, for each row $j$, deleting row $j$ yields a fractal $\DHHF(t-1;k, (v_1, \dots, v_{j-1}, v_{j+1} , \dots ,  v_t),t-1,\min(p,t-1))$.
Fractal \PHHF s are simply fractal \DHHF s with $p=t$.
A $\DHHF(t;k,(v_1, \dots,  v_n),t,p)$ is {\em $\alpha$-fractal} if it is fractal and at least $\alpha$ rows of the \DHHF\ contain all distinct symbols.
The following equivalence is straightforward:

\begin{lemma}\label{addfractal}
An array is an $\alpha$-fractal $\DHHF(t;k,(v_1, \dots,  v_t),t,p)$ with $\alpha \geq 1$ if and only if it has a row $r$ containing all distinct symbols and the remaining rows form an $(\alpha-1)$-fractal $\DHHF(t-1;k,(v_1, \dots,  v_{r-1}, v_{r+1}, \dots, v_t),t-1,\min(p,t-1))$.
\end{lemma}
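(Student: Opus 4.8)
The engine of the whole proof is one elementary observation, which I would isolate first: a row in which all $k$ symbols are distinct separates \emph{every} partition of \emph{every} set of columns, simply because any two columns at all differ in that row. Consequently, any array possessing such a row is automatically a $\DHHF(t;k,(v_1,\dots,v_t),t,p)$, so the distributing requirement costs nothing, and the only substantive content is the fractal (row-deletion-hereditary) structure together with the count of all-distinct rows. I would prove the two implications separately, invoking this observation in both and an induction on $t$ in the harder direction.

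For the forward direction, assume $\A$ is an $\alpha$-fractal $\DHHF(t;k,(v_1,\dots,v_t),t,p)$ with $\alpha\ge 1$, and fix a row $r$ of all distinct symbols (one exists since $\alpha\ge 1$). For $t\ge 3$ the fractal property applied to row $r$ says precisely that the remaining rows form a fractal $\DHHF(t-1;k,(v_1,\dots,v_{r-1},v_{r+1},\dots,v_t),t-1,\min(p,t-1))$; for $t\le 2$ this sub-array is fractal by the base case of the definition and is a DHHF of the stated parameters trivially. Deleting row $r$ removes exactly one all-distinct row, so at least $\alpha-1$ remain, making the sub-array $(\alpha-1)$-fractal, as required.

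The reverse direction is where I would induct on $t$, with base case $t\le 2$. Suppose row $r$ has all distinct symbols and the other rows form an $(\alpha-1)$-fractal $\DHHF(t-1;k,(v_1,\dots,v_{r-1},v_{r+1},\dots,v_t),t-1,\min(p,t-1))$. The key observation makes $\A$ a $\DHHF(t;k,(v_1,\dots,v_t),t,p)$, and its all-distinct rows number $1+(\alpha-1)=\alpha$. To see $\A$ is fractal I must show that deleting each row $j$ leaves a fractal $\DHHF(t-1;\dots,t-1,\min(p,t-1))$. The case $j=r$ is the hypothesis verbatim. The case $j\neq r$ is the crux, and the main obstacle: the residual array still contains row $r$, so it is a DHHF by the key observation, but I must separately argue that it is \emph{fractal}. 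I would peel row $j$ off the other rows using their fractal property, yielding a fractal $\DHHF(t-2;\dots,t-2,\min(p,t-2))$, and then re-adjoin row $r$. Re-adjoining an all-distinct row to this fractal $\DHHF(t-2;\dots)$ is exactly the reverse implication at strength $t-1$ with $\alpha=1$, so the induction hypothesis delivers fractality and closes the argument. The one point needing care is bookkeeping on the number of parts, namely checking that $\min(\min(p,t-1),t-2)=\min(p,t-2)$ so that the parameters line up when the hypothesis is invoked.
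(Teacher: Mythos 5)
Your proposal is correct. There is, however, nothing in the paper to compare it against: the authors state this lemma with only the remark ``The following equivalence is straightforward'' and give no proof at all, so your argument supplies exactly the details they chose to omit. Your decomposition is the natural one, and you correctly isolate the two points that make the lemma non-vacuous: first, that an all-distinct row separates every partition of every column set, so the distributing condition itself is free once such a row exists; second, that in the reverse direction the only real work is showing that deleting a row $j \neq r$ still leaves a \emph{fractal} array, which you handle by peeling $j$ off the sub-array $\B$ formed by the rows other than $r$ and then re-adjoining $r$ via the induction hypothesis at strength $t-1$ with $\alpha = 1$. Your bookkeeping check $\min(\min(p,t-1),t-2)=\min(p,t-2)$ is precisely the identity needed for the parameters to line up, and the induction on $t$ is well founded since the invocation is always at strength $t-1$. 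The only pedantic remark worth making is that when $t=3$ the ``peeling'' step cannot literally cite the recursive clause of the fractal definition (since $\B$ then has two rows and falls under the base case $t \leq 2$); there the fact that deleting a row of $\B$ leaves a fractal $\DHHF(1;k,(v),1,1)$ holds trivially because every one-row array is such an object. This does not affect correctness, but a careful write-up would flag it so the induction's first nontrivial step is seen to go through.
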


One characterization of fractal \DHHF s follows:

\begin{lemma}\label{fractaldhhf}
A \DHHF$(t;k,(v_1,\dots,v_t),t,p)$ is fractal if and only if every partition of $\ell$ of its columns into $\min(p,\ell)$ classes is separated by at least $t+1-\ell$ rows.
\end{lemma}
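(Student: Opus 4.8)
The plan is to prove the equivalence by induction on $t$, peeling off one row at a time and matching the recursive definition of ``fractal'' against a recursive reformulation of the separation condition. Write $(\star)$ for the condition that every partition of $\ell$ columns into $\min(p,\ell)$ classes is separated by at least $t+1-\ell$ rows, and let $\A$ denote the array in question and $\A_j$ the array obtained from $\A$ by deleting row $j$. Observe first that $(\star)$ imposes no real constraint when $\ell=1$ (a single column is separated by all $t$ rows) nor when $\ell>t$ (the bound $t+1-\ell$ is non-positive), so only $2\le \ell\le t$ carries content; observe also that the $\ell=t$ instance of $(\star)$ is exactly the defining \DHHF$(t;k,(v_1,\dots,v_t),t,p)$ separation property, since $\min(p,t)=p$ and $t+1-t=1$. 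The base cases $t\le 2$ are then immediate: such an array is fractal by definition, and $(\star)$ holds because its only nontrivial instance is this defining property.

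For the inductive step $t\ge 3$, the core is to show that $(\star)$ for $\A$ is equivalent to the conjunction of (i) $\A$ being a \DHHF$(t;k,(v_1,\dots,v_t),t,p)$, and (ii) for each row $j$, the array $\A_j$ satisfying $(\star)$ at strength $t-1$ with $\min(p,t-1)$ parts. Granting this, the inductive hypothesis rewrites (ii) as ``$\A_j$ is a fractal \DHHF$(t-1;k,(v_1,\dots,v_{j-1},v_{j+1},\dots,v_t),t-1,\min(p,t-1))$ for every $j$'', and together with (i) this is precisely the recursive definition of $\A$ being fractal; hence $(\star)\Leftrightarrow\text{fractal}$. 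Item (i) is just the $\ell=t$ instance of $(\star)$, as noted above. The deletion direction of (ii) is routine: for $\ell\le t-1$ one has $\min(\min(p,t-1),\ell)=\min(p,\ell)$, so a partition of $\ell$ columns into $\min(p,\ell)$ classes is a legitimate instance of $(\star)$ for $\A_j$, and since at most one of the $\ge t+1-\ell$ rows of $\A$ separating it is row $j$, at least $t-\ell$ survive in $\A_j$, which is exactly what $(\star)$ at strength $t-1$ demands.

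The crux, and the step I expect to be the main obstacle, is the reverse implication: deducing $(\star)$ for $\A$ from (i) and (ii). The difficulty is an apparent off-by-one, since each single deletion only guarantees $t-\ell$ separating rows whereas $(\star)$ for $\A$ asks for $t+1-\ell$. I plan to fix a partition $P$ of $\ell$ columns with $2\le\ell\le t-1$, let $m$ be the number of rows of $\A$ separating $P$, and argue in two moves. First, $m\ge 1$: otherwise no row of any $\A_j$ separates $P$, contradicting $(\star)$ for $\A_j$, which requires $t-\ell\ge 1$ separating rows. Second, with a separating row $j^\ast$ now in hand, delete it: the rows of $\A_{j^\ast}$ separating $P$ are precisely the remaining $m-1$ separating rows of $\A$, so $(\star)$ for $\A_{j^\ast}$ forces $m-1\ge t-\ell$, i.e.\ $m\ge t+1-\ell$. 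The essential point is that the freedom to delete a \emph{separating} row, rather than an arbitrary one, is exactly what upgrades the strength-$(t-1)$ bound to the sharper strength-$t$ bound, and this is the source of the ``$+1$'' in $t+1-\ell$. The cases $\ell=t$ (covered by (i)) and $\ell>t$ (vacuous) complete the argument.
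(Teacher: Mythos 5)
Your proof is correct, but it proceeds differently from the paper's. The paper gives a short, non-inductive argument, by contraposition in each direction, that works with arbitrary subsets of rows at once: if some partition of $\ell$ columns into $\min(p,\ell)$ classes is separated by exactly $\rho\le t-\ell$ rows, then the remaining $t-\rho\ge\ell$ rows form a subarray none of whose rows separates that partition, so this subarray is not a \DHHF\ of strength $t-\rho$ and hence the array is not fractal; conversely, if the array is not fractal then (unwinding the recursive definition) some $\ell$-row subarray $\B$ with $2\le\ell\le t$ fails to be a \DHHF$(\ell;k,(v_1,\dots,v_\ell),\ell,\min(p,\ell))$, and a partition witnessing this failure is separated by at most the $t-\ell$ rows of $\A$ outside $\B$. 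Your induction on $t$ instead follows the recursive definition one row at a time; its crux, deleting a \emph{separating} row $j^\ast$ so that the strength-$(t-1)$ bound applied to $\A_{j^\ast}$ gives $m-1\ge t-\ell$ and hence $m\ge t+1-\ell$, is the single-row counterpart of the paper's device of discarding all the separating rows simultaneously. What your version buys is rigour about the recursion: the paper silently uses the facts that fractality propagates to every subset of rows and that non-fractality yields some failing row-subset, facts that formally require exactly the kind of induction you carry out, and your proof also makes explicit the bookkeeping that the parts parameter satisfies $\min(\min(p,t-1),\ell)=\min(p,\ell)$ for $\ell\le t-1$. What the paper's version buys is brevity: each direction is a two-sentence contrapositive with no induction hypothesis to manage. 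Both arguments are sound.
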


\begin{proof}
Suppose that there is some set $S$ of $\ell$ columns with $1 \leq \ell \leq t$ and some partition $\{C_1, \dots, C_{\min(p,\ell)}\}$ of $S$ into $\min(p,\ell)$ classes, so that  exactly $\rho \leq t-\ell$ rows separate the classes. The $t-\rho \geq \ell$ remaining rows, say without loss of generality the first $t-\rho$ rows, do not form a \DHHF$(t-\rho; k,(v_1,\dots,v_{t-\rho}),t-\rho,\min(\ell,p))$ because none of the rows separates classes $C_1, \dots, C_{\min(p,\ell)}$.
So the \DHHF$(t;k,(v_1,\dots,v_t),t,p)$  cannot be  fractal.

In the other direction, if $\A$ is not a fractal \DHHF$(t;k,(v_1,\dots,v_t),t,p)$, then some set of $\ell$ rows with $2 \leq \ell \leq t$, say without loss of generality the first $\ell$ rows, must yield an array $\B$ that is not a \DHHF$(\ell;k,(v_1,\dots,v_\ell),\ell,\min(p,\ell))$.
Let $\{C_1, \dots, C_{\min(p,\ell)}\}$ be a partition  that is separated by no row of $\B$.
Then $\{C_1, \dots, C_{\min(p,\ell)}\}$ is separated in $\A$ by at most $t-\ell$ rows.
\end{proof}

\subsection{Fractal \DHHF s}

\begin{lemma}\label{dhhfmustbephhf}
Whenever $t < \binom{p+1}{2}$, every \DHHF$(t;k,(w_1,\dots,w_t),t,p)$ is a \PHHF$(t;k,(w_1,\dots,w_t),t)$
\end{lemma}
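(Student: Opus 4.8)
The plan is to prove the contrapositive. Suppose $\A$ is a \DHHF$(t;k,(w_1,\dots,w_t),t,p)$ that is not a \PHHF$(t;k,(w_1,\dots,w_t),t)$. Since a single row in which all $t$ columns of a $t$-set are distinct separates \emph{every} partition of that $t$-set into $t$ classes, failure to be a \PHHF\ means that some $t$-set of columns $S=\{c_1,\dots,c_t\}$ has no row in which all of its entries are distinct. Equivalently, in each of the $t$ rows at least one pair of columns of $S$ shares a symbol. My aim is to assemble these forced collisions into a single partition of $S$ into $p$ classes that no row separates, contradicting the defining property of a \DHHF\ with $p$ parts.

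To do so I would, for each row $r$, select one pair $\{a_r,b_r\}\subseteq S$ of columns that agree in row $r$, and form the simple graph $H$ on vertex set $S$ whose edges are $\{a_r,b_r\}$ for $1\le r\le t$. As there are only $t$ rows, $H$ has at most $t$ edges. The key observation is that whenever a partition of $S$ places $a_r$ and $b_r$ in \emph{different} classes, row $r$ fails to separate it, because those two columns carry the same symbol in row $r$. Consequently any proper colouring of $H$ with $p$ colours, read as a partition of $S$ into at most $p$ classes (padded with empty classes to exactly $p$, which the definition permits), is separated by no row at all; this is the contradiction I want. It therefore suffices to show that $H$ admits a proper $p$-colouring.

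This reduces everything to a standard edge-counting bound for the chromatic number: any graph with fewer than $\binom{p+1}{2}$ edges is properly $p$-colourable. I would establish it by the usual critical-subgraph argument: if $\chi(H)\ge p+1$, then $H$ contains a subgraph $H'$ that is minimal subject to $\chi(H')=p+1$, and such a graph has minimum degree at least $p$; a vertex of degree at least $p$ forces $H'$ to have at least $p+1$ vertices, whence $|E(H')|\ge\tfrac12 p(p+1)=\binom{p+1}{2}$, and so $|E(H)|\ge\binom{p+1}{2}$. Because $H$ has at most $t$ edges and $t<\binom{p+1}{2}$ by hypothesis, we conclude $\chi(H)\le p$, producing the required colouring and completing the contradiction.

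The main obstacle, I expect, is not any single computation but recognising the right reduction: encoding ``no row separates the partition'' as ``every selected collision edge is bichromatic'', so that the existence of the sought partition becomes a proper-colouring question, and then matching the threshold $\binom{p+1}{2}$ to the number of rows $t$. Once the auxiliary graph $H$ is in hand, the colourability bound is routine. A minor point to verify carefully is that an edge of $H$ may be chosen more than once across rows, so that $H$ has at most $t$ edges rather than exactly $t$; this only decreases the edge count and so does not affect the argument.
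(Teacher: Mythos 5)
Your proof is correct and follows essentially the same route as the paper's: both extract one colliding pair per row of the unseparated $t$-set to form a conflict graph, invoke the edge bound $t<\binom{p+1}{2}$ to obtain a proper $p$-colouring, and read the colour classes as a partition of the $t$ columns that no row separates, contradicting the \DHHF\ property. The only cosmetic differences are that you use a simple graph and a critical-subgraph argument for the colourability bound, whereas the paper uses a multigraph with exactly $t$ edges and a greedy-colouring remark.
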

\begin{proof}
Let $\A$ be an \HHF$(t;k,(w_1,\dots,w_t))$ that is not a  \PHHF$(t;k,(w_1,\dots,w_t),t)$.
Choose columns $\{c_1,\dots,c_t\}$ not separated by any row of $\A$.
Form a multigraph $G$ with $t$ vertices, $\{c_1,\dots,c_t\}$; for each row, choose a pair of columns $c_i$ and $c_j$ having the same symbol in this row and add $\{c_i.c_j\}$ as an edge.
Because $G$ has $t$ edges and $t < \binom{p+1}{2}$ by assumption, $G$ has a proper colouring in $p$ colours.
(A simple greedy colouring establishes that if $p+1$ colours were needed, the number of edges must be at least $\sum_{i=1}^p i = \binom{p+1}{2}$.)
Let $\{ C_1, \dots, C_p\}$ be the colour classes of  a proper colouring in $p$ colours.
Then the partition $\{C_1, \dots, C_p\}$ of $t$ columns of $\A$ is not separated by any row of $\A$, so $\A$ is not a \DHHF$(t;k,(w_1,\dots,w_t),t,p)$.
\end{proof}

\begin{table}[htbp]
\begin{center}
\framebox{
\begin{tabular}{ccccc}
\multicolumn{5}{c}{\PHF(4;5,4,4)}\\
1&1&2&3&4\\
1&2&2&3&4\\
1&2&3&3&4\\
1&2&3&4&4\\
\end{tabular}}
\framebox{
\begin{tabular}{cccccccccc}
\multicolumn{10}{c}{\DHF(4;10,4,4,2)}\\
1&1&1&2&2&2&3&3&3&4\\
1&2&3&1&2&3&1&2&3&4\\
1&2&3&2&3&1&3&1&2&4\\
1&2&3&3&1&2&2&3&1&4\\
\end{tabular}}
\end{center}
\caption{A \PHF(4;5,4,4) and a \DHF(4;10,4,4,2)}\label{phfdhfex}
\end{table}

By restricting the number of parts, fractal \DHHF s can exist with more columns than the corresponding fractal \PHHF s.
An example is given in Table \ref{phfdhfex}.
According to Niu and Cao \cite{NiuC16}, every \HF$(4;k,4)$ that accomplishes every separation of four columns into two classes of size two must have $k \leq 10$, and the array shown accomplishes every such separation with $k=10$.
One can verify that the array also separates all partitions of four columns into one class of size three and one of size one, and hence is a \DHF(4;10,4,4,2).
Lemma \ref{dhhfmustbephhf} ensures that every \DHF(4;$k$,4,4,3) must be a \PHF(4;$k$,4,4).
A simple counting argument ensures that a \PHF(4;$k$,4,4) has $k \leq 5$.
Hence while any \PHF(4;$k$,4,4) or \DHF(4;$k$,4,4,3) has $k \le 5$, restricting the number of classes in the partition to two doubles the number of columns possible.
We return to this in the concluding remarks.

We mention one construction of \DHHF s here:

\begin{lemma}
A fractal \DHHF$(3;a_1a_2,(a_1,a_1,a_2),3,2)$ exists whenever $a_1\geq a_2$ are positive integers.
\end{lemma}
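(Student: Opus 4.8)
The plan is to construct the array explicitly from a grid-like (net-like) encoding and then verify the fractal property through the characterization in Lemma~\ref{fractaldhhf}. I would index the $a_1a_2$ columns by pairs $(i,j)$ with $i\in\{0,\dots,a_1-1\}$ and $j\in\{0,\dots,a_2-1\}$, and in the column indexed by $(i,j)$ place the symbol $i$ in row~$1$, the symbol $(i+j)\bmod a_1$ in row~$2$, and the symbol $j$ in row~$3$. Row~$1$ and row~$2$ then use exactly $a_1$ symbols each (in row~$2$, fixing $j=0$ and letting $i$ range over all residues shows every symbol occurs) and row~$3$ uses $a_2$ symbols, matching the profile $(a_1,a_1,a_2)$.

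The single fact that drives everything is that any two distinct columns agree in at most one row. I would establish this by checking that agreement in any two of the three rows forces the two index pairs to coincide: agreement in rows~$1$ and~$3$ gives $i=i'$ and $j=j'$ at once; agreement in rows~$1$ and~$2$ gives $i=i'$ together with $i+j\equiv i'+j'\pmod{a_1}$, hence $j\equiv j'\pmod{a_1}$, which forces $j=j'$ because $0\le j,j'<a_2\le a_1$; and agreement in rows~$2$ and~$3$ is symmetric. This is the only place the hypothesis $a_1\ge a_2$ is used, reducing a congruence modulo $a_1$ to genuine equality of the second coordinates.

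To finish, I would apply Lemma~\ref{fractaldhhf} and check, for each $\ell\in\{1,2,3\}$, that every partition of $\ell$ columns into $\min(2,\ell)$ classes is separated by at least $4-\ell$ rows. The case $\ell=1$ uses a single class and is vacuous. For $\ell=2$ the only nontrivial partition puts the two distinct columns in different classes, and the agreement bound shows they differ in at least two rows, supplying the required $4-2=2$ separating rows. For $\ell=3$ a partition into two classes is a $2$--$1$ split $\{A,B\},\{C\}$; were it separated by no row, then in every row the symbol of $C$ would match those of both $A$ and $B$, so $C$ and $A$ would agree in all three rows, contradicting the agreement bound, and hence at least $4-3=1$ row separates.

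I expect no serious obstacle: once the index encoding is set up, the entire argument collapses onto the ``any two columns agree in at most one row'' property. The only step demanding any care is the modular reduction in the row~$1$/row~$2$ and row~$2$/row~$3$ cases, where the inequality $a_1\ge a_2$ is exactly what is needed to pass from congruence modulo $a_1$ to equality of second coordinates.
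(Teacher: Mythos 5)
Your construction is exactly the paper's: the published proof consists of nothing more than this array (index the columns by $\{0,\dots,a_1-1\}\times\{0,\dots,a_2-1\}$, put the first coordinate in row 1, the sum modulo $a_1$ in row 2, and the second coordinate in row 3), with all verification left to the reader. Your central observation --- that any two distinct columns agree in at most one row, with the hypothesis $a_1\ge a_2$ used precisely to upgrade $j\equiv j'\pmod{a_1}$ to $j=j'$ --- is correct and is the right engine for verifying fractality through Lemma \ref{fractaldhhf}.

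However, your $\ell=3$ case contains a step that does not follow. For the partition $\{\{A,B\},\{C\}\}$, a row fails to separate when the symbol of $C$ coincides with that of $A$ \emph{or} that of $B$; it need not coincide with both (recall that $A$ and $B$ lie in the same class, so they impose no constraint on each other). Hence from ``no row separates'' you cannot conclude that ``in every row the symbol of $C$ matches those of both $A$ and $B$,'' and the inference that $C$ and $A$ agree in all three rows collapses. The repair is immediate: if none of the three rows separates the partition, then in each row $C$ agrees with $A$ or with $B$, so by pigeonhole $C$ agrees with one of them, say $A$, in at least two rows, contradicting your agreement bound. With that substitution the argument is complete; note also that this $\ell=3$ condition (at least one separating row) is exactly what certifies the array is a \DHHF\ with $p=2$ in the first place, while the $\ell=2$ condition (at least two separating rows) is what guarantees the property survives deletion of any single row.
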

\begin{proof}
Index columns by $\{0,\dots,a_1-1\} \times \{0,\dots,a_2-1\}$,
In column $(a,b)$, place $a$ in row 1, $b$ in row 3, and $a+b \pmod{a_1}$ in row 2.
\end{proof}

\subsection{Construction of fractal \PHHF s}

A  sufficient condition for a \PHHF\ to be fractal follows.

\begin{lemma}\label{singsing}
If a \PHHF$(t;k,(v_1,\dots,v_t),t)$ has at most one singleton in each row then it is fractal.
\end{lemma}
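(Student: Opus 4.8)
The plan is to induct on $t$, working directly from the recursive definition of fractality. For $t\le 2$ the array is fractal by definition, so suppose $t\ge 3$ and that the statement holds for $t-1$. Writing $\A$ for the given \PHHF$(t;k,(v_1,\dots,v_t),t)$, I would delete an arbitrary row $j$ to obtain the array $\B_j$ on the remaining $t-1$ rows, and show that $\B_j$ is itself a \PHHF$(t-1;k,(v_1,\dots,v_{j-1},v_{j+1},\dots,v_t),t-1)$ with at most one singleton per row. The singleton hypothesis is inherited for free: deleting a row does not change the contents of any surviving row, and whether a cell is a singleton depends only on the row it lies in. Granting that $\B_j$ is a \PHHF\ of strength $t-1$, the induction hypothesis makes $\B_j$ fractal; since $j$ is arbitrary, the recursive definition then makes $\A$ fractal. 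So everything reduces to a single claim.

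The claim is that every set $S$ of $t-1$ columns of $\A$ is separated by at least two rows. This is exactly what is needed: if every $(t-1)$-set is separated by at least two rows of $\A$, then for each $j$ at least one separating row survives in $\B_j$, so $\B_j$ separates every $(t-1)$-set and is a \PHHF\ of strength $t-1$.

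To prove the claim (I assume $k\ge t$; the case $k<t$ makes the strength-$t$ condition vacuous and is handled directly), suppose for contradiction that some $(t-1)$-set $S$ is separated by at most one row. If no row separates $S$, then for any column $c\notin S$ no row separates the $t$-set $S\cup\{c\}$, contradicting that $\A$ is a \PHHF\ of strength $t$. Hence exactly one row, say row $g$, separates $S$. Now for each column $c\notin S$, some row must separate $S\cup\{c\}$; any such row in particular separates $S$ and therefore equals $g$. Thus row $g$ separates $S\cup\{c\}$ for every $c\notin S$. Fixing $a\in S$, this forces the symbol of $a$ in row $g$ to differ from that of every other column of $S$ (since $g$ separates $S$) and from that of every column outside $S$ (since $g$ separates each $S\cup\{c\}$); hence $a$ is a singleton of row $g$. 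As this holds for every $a\in S$, all $t-1\ge 2$ columns of $S$ are singletons in row $g$, contradicting the hypothesis of at most one singleton per row.

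The induction and the inheritance of the singleton property are routine scaffolding; the crux, and the only place the hypothesis is used, is the final forcing step, where having a single row available to separate all the extensions $S\cup\{c\}$ simultaneously pins an entire $(t-1)$-set into distinct, nowhere-repeated symbols of that row. An alternative would be to bypass the induction by proving, via the same forcing idea together with a cycle argument on the columns as in Lemma~\ref{easy}, that every $\ell$-set is separated by at least $t+1-\ell$ rows, and then to invoke Lemma~\ref{fractaldhhf}; but the inductive route keeps the work confined to the single clean case $\ell=t-1$.
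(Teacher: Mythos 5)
Your proof is correct, and it shares the paper's inductive skeleton exactly (induct on $t$, delete an arbitrary row, show the surviving rows form a \PHHF\ of strength $t-1$, note that the singleton hypothesis is inherited), but the crux is argued by a genuinely different mechanism. The paper applies the singleton hypothesis to the \emph{deleted} row: given a $(t-1)$-set $T$ separated by no surviving row, the facts that $|T| \geq 2$ and the deleted row has at most one singleton yield a column of $T$ whose symbol in that row recurs in some column $c$, so $T \cup \{c\}$ (of size $t-1$ or $t$) is separated by no row of $\A$ at all, a contradiction. You instead prove the uniform claim that every $(t-1)$-set $S$ is separated by at least two rows of $\A$, applying the singleton hypothesis to the unique \emph{separating} row $g$: since $g$ must also separate every extension $S \cup \{c\}$, every column of $S$ is forced to be a singleton of row $g$, giving $t-1 \geq 2$ singletons in one row. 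Your claim is precisely the $\ell = t-1$ instance of the characterization in Lemma~\ref{fractaldhhf}, and it covers all row deletions simultaneously rather than one WLOG deletion; the paper's argument is more local and a little shorter. Both arguments implicitly require $k \geq t$, so that failure to separate a set of fewer than $t$ columns genuinely contradicts strength $t$; you at least flag this assumption, while the paper leaves it tacit.
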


\begin{proof}
We prove the result by induction on $t$. The result is trivial when $t \leq 2$. Let $\A$ be a \PHHF$(t;k,(v_1,\dots,v_t),t)$ with $t \geq 3$ that has at most one singleton in each row. Let $\B$ be the array obtained from $\A$ by deleting an arbitrary row of $\A$, say the last without loss of generality. It suffices to show that $\B$ is a \PHHF$(t-1;k,(v_1,\dots,v_{t-1}),t-1)$, because then it will follow that $\B$ is fractal by our inductive hypothesis.

Suppose otherwise that there is a $(t-1)$-set $T$ of columns of $\B$ that is not separated by any row of $\B$. Since $t-1 \geq 2$ and there is at most one singleton in the last row of $\A$, there is a symbol $\sigma$ that, in the last row of $\A$, appears in some column in $T$ and also in some other column $c$ that may or may not be in $T$. Then $|T \cup \{c\}| \in \{t-1,t\}$ and no row of $\A$ separates the columns in $T \cup \{c\}$, a contradiction.
\end{proof}

Using Lemma \ref{singsing}, many \PHHF s can be seen to be fractal. For example, Walker and Colbourn \cite{WCphf} use a greedy construction of ``triangle-free, 3-regular, resolvable linear spaces (tfrrls)'' to produce many $\PHF(3; k, v, 3)$s having no singletons.
Fuji-Hara \cite{Fuji-Hara} gives an explicit construction of tfrrls using mutually disjoint spreads in a generalized quadrangle, thereby  proving that a $\PHF(3; q^2 (q+1), q^2, 3)$ exists when $q \ge 3$ is a prime power.
Using generalized quadrangles in Hermitian varieties, he also proved that a $\PHF(3; q^5, q^3, 3)$ exists for  $q$ a  prime power.
Lemma \ref{easy} produces a fractal $\PHF(t;a^t,a^{t-1},t)$; for $t=3$, Fuji-Hara's construction has many more columns, suggesting that the easy method of Lemma \ref{easy} is far from optimal.
See also \cite{ChongGe16} for further improvements when $t=3$ and when $t=4$.

Fractal \PHHF s can also be constructed recursively. The next result is based on \cite[Theorem~4.8]{WCphf}.

\begin{theorem}
Suppose that a \PHHF$(t;k,(v_1,\dots,v_t),t)$ exists with $k > t \geq 2$, and that $\ell$ is a positive integer.
Then a \PHHF$(t+1; \ell k, (\ell v_1,\dots,\ell v_t,k),t+1)$ exists.
If the \PHHF\ of strength $t$ is fractal, so is the \PHHF\ of strength $t+1$.
\end{theorem}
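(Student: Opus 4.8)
The plan is to build the target family by \emph{blowing up} each column of the given family into $\ell$ copies, distinguished by a ``block label,'' and adjoining one new row that records the original column index. Write $\A$ for the assumed \PHHF$(t;k,(v_1,\dots,v_t),t)$ and index the $\ell k$ columns of the new array $\A'$ by pairs $(a,c)$ with $a \in \{1,\dots,\ell\}$ and $c$ a column of $\A$. For $1 \le i \le t$ I would put the pair $(a,\A_{i,c})$ in cell $(i,(a,c))$, and in the new row $t+1$ put the symbol $c$ in cell $(t+1,(a,c))$. Then row $i \le t$ uses at most $\ell v_i$ symbols and row $t+1$ uses exactly $k$, so $\A'$ has the asserted shape, and only the separation properties remain to check.

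For the \PHHF\ property, take any $t+1$ columns $(a_1,c_1),\dots,(a_{t+1},c_{t+1})$ and let $m$ be the number of distinct values among $c_1,\dots,c_{t+1}$. If $m=t+1$ the new row $t+1$ separates them outright. Otherwise $m \le t$ and the distinct values form an $m$-subset $D$ of columns of $\A$; since $k>t$, such a $D$ extends to a $t$-subset, so $\A$ separates $D$ in some row $i$. That row $i$ then separates the chosen $t+1$ columns of $\A'$, by the key observation I will reuse throughout: two columns $(a,c)$ and $(a',c')$ can collide in row $i$ only if their block labels agree \emph{and} row $i$ of $\A$ assigns equal symbols to $c$ and $c'$; but since row $i$ separates $D$ the latter forces $c=c'$, whence the two columns coincide. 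Thus within a fixed original index the $\ell$ copies are automatically separated by their labels, and separation in $\A'$ is governed entirely by separation of the underlying set $D$ in $\A$ together with the index-recording row.

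For the fractal conclusion I would invoke Lemma~\ref{fractaldhhf}. Since $\A'$ has strength $t+1$, being fractal is equivalent to every $s$-subset of its columns ($1 \le s \le t+1$) being separated by at least $t+2-s$ rows; and fractality of $\A$ says every $s$-subset of $\A$ ($1 \le s \le t$) is separated by at least $t+1-s$ rows. Fix an $s$-subset of $\A'$, let $m$ be the number of distinct original indices it uses (so $1 \le m \le s$), and let $D$ be that $m$-subset of columns of $\A$. By the collision observation, every one of the first $t$ rows that separates $D$ in $\A$ also separates the chosen $s$-subset in $\A'$, while row $t+1$ separates it precisely when $m=s$. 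Hence the number of separating rows of $\A'$ is at least the number of rows of $\A$ that separate $D$, plus one extra when $m=s$.

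It then remains to discharge this bound by cases on $m$. When $m \le t$, fractality of $\A$ gives at least $t+1-m$ rows separating $D$: if $m=s$ this yields $1+(t+1-s)=t+2-s$, and if $m<s$ it already yields $t+1-m \ge t+2-s$. The only remaining possibility is $m=t+1$, which forces $s=t+1=m$, so that row $t+1$ alone supplies the single separating row required by $t+2-s=1$. In every case the count reaches $t+2-s$, so $\A'$ is fractal. I expect no single deep obstacle here; the one genuine idea is the label-based automatic separation of same-index copies, and the main work is the clean bookkeeping of the case $m<s$, where the new row contributes nothing but the fractal surplus $t+1-m$ of $\A$ exactly compensates.
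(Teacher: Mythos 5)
Your construction is identical to the paper's (take $\ell$ symbol-disjoint copies of $\A$ side by side and append a row recording the original column index), and your verification of the strength-$(t+1)$ separation matches the paper's argument as well, including the appeal to $k>t$ to extend a set of at most $t$ original indices to a $t$-set. Where you genuinely diverge is the fractal half. The paper proves fractality by induction on $t$ using the recursive definition: delete a row of $\B$; if it is the appended row, the result is asserted to be fractal because disjoint concatenation of copies of a fractal \PHHF\ is fractal; if it is one of the first $t$ rows, the result is the same construction applied to $\A$ minus that row, and the induction closes with a ``routine'' base case $t=2$. You instead invoke the counting characterization of Lemma~\ref{fractaldhhf} on both sides: fractality of $\A$ gives at least $t+1-m$ separating rows for the set $D$ of $m$ distinct original indices, your collision observation transfers each such row to a separating row of $\A'$, and row $t+1$ contributes exactly when $m=s$; the case analysis ($m=s$, $m<s$, $m=t+1$) then delivers the required $t+2-s$ rows. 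Your route is non-inductive and arguably tighter: it makes explicit the step the paper leaves unproved (that deleting the appended row leaves a fractal array, which in your framework is just the $m\le t$ counting), it needs no base case, and as a bonus the case $s=t+1$ of your count (at least one separating row) re-derives the \PHHF\ property itself, so the counting argument subsumes both halves of the theorem. What the paper's induction buys in exchange is brevity and a direct appeal to the definition of fractal, at the cost of two ``routine'' gaps. Your proof is correct as written.
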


\begin{proof}
Let $\A_0,\ldots,\A_{\ell-1}$ be copies of the (fractal) \PHHF$(t;k,(v_1,\dots,v_t),t)$ with symbols renamed such that in each row the sets of symbols in $\A_i$ and $\A_j$ are disjoint when $i \neq j$. Let $\B$ be the $(t+1) \times \ell k$ array, with columns indexed by $\{0,\dots,k-1\} \times \{0,\dots,\ell-1\}$, obtained from $[\A_0 \cdots \A_{\ell-1} ]$ by appending a $(t+1)$st row that contains symbol $c$ in column $(c,s)$ for $0 \leq c < k$ and $0 \leq s \leq \ell-1$.

Let $T = \{ (c_i,s_i) : 1 \leq i \leq t+1\}$ be a set of $t+1$ column indices of $\B$.
If the coordinates $\{c_i : 1 \leq i \leq t+1\}$ are all distinct, then $T$ is separated in row $t+1$.
Otherwise $|\{c_i : 1 \leq i \leq t+1\}| \leq t$ and there is a row $r$ of $\A$  that separates the set $\{c_i : 1 \leq i \leq t+1\}$.
Because no columns $(c,s_i)$ and $(d,s_j)$ contain the same symbol unless $i = j$, $T$ is separated in row $r$ of $\B$.

Now we show that $\B$ is fractal when $\A$ is.
Suppose that $\C$ is obtained by deleting a row of $\B$.
If row $t+1$ is deleted, $\C$ is a fractal  \PHHF$(t; \ell k, (\ell v_1,\dots,\ell v_t),t)$ because $\A$ is fractal.
If row $i$ with $1 \leq i \leq t$ is deleted, then $\C$ is obtained by applying the construction of this lemma to the fractal \PHHF$(t-1;k,(v_1,\dots,v_{i-1}, v_{i+1}, \dots, v_t),t-1)$ obtained from $\A$ by deleting row $i$.
It therefore suffices to prove the statement when $t=2$ and this is routine to verify.
\end{proof}

\section{Blackburn's Method, revised}

To form a \DHHF\ with $n$ rows and strength $n+d$ with $1 \leq d < n$, we generalize a method due to Blackburn \cite{blackburn2000}.
In order to construct \PHF s,  he used the `easy' examples of fractal \PHF s from Lemma \ref{easy} without defining and using the fractal property explicitly.
In addition to fractal \DHHF s, we require a second  ingredient, as suggested by Lemma \ref{blackcov}.
An {\em $(n,m,d)$-covering} of {\em type} $(\rho_0,\dots,\rho_{m-1})$  is a collection of $n$ subsets $\{P_0, \dots , P_{n-1}\}$ of $\{0,\dots,m-1\}$ satisfying:
\begin{enumerate}
\item $| \{ P_r : 0 \leq r < n, P_r \ni c \}| = \rho_c$ for $0 \leq c < m$; and
\item For every $S \subseteq \{0,\dots,m-1\}$ with $|S|=d$,
$S$ is a subset of some set in $\{P_1,...,P_{n-1}\}$.
\end{enumerate}

\begin{theorem}\label{blackburn}
Suppose that there exist
\begin{itemize}
\item  an $(n,m,d)$-covering ${\mathcal P} = \{P_0, \dots , P_{n-1}\}$   of type $(\rho_0,\dots,\rho_{m-1})$, and
\item for each $0 \leq c < m$, a $\rho_c$-fractal \DHHF$(n;k_c,(v_{0,c}, \dots, v_{n-1,c}),n,p)$
in which, for $0 \leq r < n$, row $r$ contains all distinct symbols when $c \in P_r$.
\end{itemize}
Then there exists a $\DHHF(n; \sum_{c=0}^{m-1} k_c, (w_0, \dots , w_{n-1}), n+d,p')$ where \[ \begin{array}{rcl}
w_r & =  & \sum_{c=0}^{m-1} v_{r,c} \mbox{ for } 0 \leq r < n, \mbox{ and}\\
p' & = & \left \{ \begin{array}{lll} p & \mbox{if} & p<n-d \\
n+d & \mbox{if} & p\geq n-d.
\end{array}  \right.  \end{array} \]
\end{theorem}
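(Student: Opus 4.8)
The plan is to build the desired array $\A$ by horizontally concatenating the $m$ ingredient fractal \DHHF s. First I would take, for each $0 \le c < m$, the $\rho_c$-fractal \DHHF\ on $k_c$ columns and rename its symbols so that, in every fixed row, distinct ingredients use disjoint symbol sets; placing these side by side yields an $n \times \sum_c k_c$ array in which row $r$ uses $\sum_c v_{r,c} = w_r$ symbols, with columns indexed by pairs $(c,j)$. Two structural facts drive everything: columns drawn from different ingredients never agree in any row (by the renaming), and when $c \in P_r$ the ingredient $\A_c$ contributes all distinct symbols in row $r$ (the defining property of the chosen fractal \DHHF s). Consequently a collision in row $r$ can only occur between two columns of a single ingredient $\A_c$ with $c \notin P_r$.

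Next I would argue by contradiction. Suppose some partition $\pi$ of a set $X$ of $n+d$ columns into at most $p'$ classes is separated by no row. For each row $r$, choose a colliding pair of columns lying in different classes; by the observation above this pair lies inside one ingredient, say $\A_{c_r}$, and necessarily $c_r \notin P_r$. Writing $T_c = X \cap \A_c$, $\ell_c = |T_c|$, and $R_c = \{r : c_r = c\}$, the sets $R_c$ partition the $n$ rows and $\sum_c \ell_c = n+d$.

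The heart of the argument is a per-ingredient bound $|R_c| \le \ell_c - 1$. For each $c$ the chosen collisions give a multigraph on $T_c$ whose edges carry the distinct row-colours $R_c$; I would show this multigraph is a forest. A cycle of length $s$ would single out $s$ columns of $\A_c$ and $s$ distinct rows, and a proper colouring of that cycle is a partition of those columns into few classes that none of the $s$ rows separates --- contradicting Lemma \ref{fractaldhhf}, which forces any partition of $s$ columns into $\min(p,s)$ classes to be separated by at least $n+1-s$ rows of the strength-$n$ ingredient. In the regime $p < n-d$ (so $p' = p$) this is clean, since $\pi$ restricted to $T_c$ already has at most $\min(p,\ell_c)$ classes and Lemma \ref{fractaldhhf} applies directly. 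Granting the bound, summation gives $n = \sum_c |R_c| \le \sum_{c : R_c \neq \emptyset}(\ell_c - 1) \le (n+d) - |C^*|$, where $C^* = \{c : R_c \neq \emptyset\}$, so $|C^*| \le d$. Finally I would invoke the covering: enlarge $C^*$ to a $d$-subset $D$ of $\{0,\dots,m-1\}$, obtain a row $r^*$ with $C^* \subseteq D \subseteq P_{r^*}$, and note that $r^* \in R_c$ for some $c \in C^* \subseteq P_{r^*}$, contradicting $c = c_{r^*} \notin P_{r^*}$.

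I expect the forest bound to be the main obstacle, specifically its proof in the perfect regime $p \ge n-d$, where $p' = n+d$ and $\pi$ is the all-singletons partition. There the restriction of $\pi$ to $T_c$ may have more than $p$ classes, so Lemma \ref{fractaldhhf} cannot be applied to $\pi$ directly; instead one must use the cycle-colouring argument together with the fact that the $\rho_c$ rows with $c \in P_r$ automatically separate $T_c$, which both rules out repeated or short collisions and keeps the chromatic requirements of the cycles within the $\min(p,s)$ colours permitted. Reconciling the chromatic number of the collision cycles with the available number of parts --- in particular when $p$ is as small as $2$ or $3$, where odd cycles and multi-edges are delicate --- is where the bulk of the care will go; the concluding covering step and the counting are then routine.
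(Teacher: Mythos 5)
Your overall architecture --- contradiction, one cross-class collision per row, grouping rows into the sets $R_c$, the count $|C^*|\le d$, then the covering contradiction --- is a sound contrapositive reorganization of the paper's direct argument, and your handling of the regime $p<n-d$ is correct. The gap is exactly where you flagged it, but it is not a matter of ``delicate care'': the unconditional forest bound $|R_c|\le \ell_c-1$ is \emph{false} in the regime $p\ge n-d$, and no refinement of the cycle-colouring argument can prove it. Lemma \ref{fractaldhhf} constrains only partitions into at most $\min(p,\ell)$ classes, so when $p=2$ it says nothing about odd collision cycles, and such cycles genuinely occur in fractal \DHHF s with $p=2$. Concretely, in the \DHF$(4;10,4,4,2)$ of Table \ref{phfdhfex} (used elsewhere in the paper as a fractal ingredient for Theorem \ref{blackburn}), columns $2$, $4$, $6$ collide pairwise in rows $3$, $4$, $1$ respectively: three columns, three distinct rows, an odd collision cycle. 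If such an ingredient meets $T$ in those three columns and those three rows choose those collisions, then $|R_c|=\ell_c=3$ and your summation step collapses. Your suggested rescue --- that rows $r$ with $c\in P_r$ cannot lie in $R_c$ --- only restricts which rows may appear in $R_c$; it does not remove the chromatic obstruction.

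The idea you are missing is that the bound $|R_c|\le\ell_c-1$ is only \emph{needed} when $|C^*|\ge d+1$, and in that case it holds for free, with no graph theory at all. If at least $d+1$ ingredients meet $T$ in two or more columns, then for every such $c$ we get $\ell_c\le (n+d)-2d=n-d$, hence $\ell_c\le p$ when $p\ge n-d$ (and trivially the class count is at most $p$ when $p'=p$); so the single fixed partition $\pi|_{T_c}$ has at most $\min(p,\ell_c)$ classes, every row of $R_c$ fails to separate it (your chosen collisions cross its classes), and Lemma \ref{fractaldhhf} caps the failures at $\ell_c-1$. Summing then gives $n\le (n+d)-|C^*|\le n-1$, a contradiction; hence $|C^*|\le d$ always (note that in the triangle scenario above $|C^*|=1\le d$, so no harm is done there), and your covering step closes the proof. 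This conditional counting is precisely the paper's proof in disguise: it splits on $\nu=|\{c:\ell_c\ge 2\}|$, handling $\nu\le d$ by the covering alone and $\nu>d$ by the observation $\ell_c\le n-d\le p$ followed by a direct application of Lemma \ref{fractaldhhf} to the restricted partitions, then summing $\sum_c\max(0,\ell_c-1)\le n+d-\nu<n$ to find one row separating all restrictions simultaneously. The entire subtlety of the theorem lives in that implication $\nu>d\Rightarrow\ell_c\le n-d$; once you have it, collision graphs never need to be mentioned.
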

\begin{proof}
Let $\A_c$ be the $\rho_c$-fractal \DHHF$(n;k_c,(v_{0,c}, \dots, v_{n-1,c}),n, p)$ for $0 \le c < m$.
Rename the symbols of each of $\{\A_c: 0 \leq c < m\}$ so that in each row the sets of symbols in $\A_i$ and $\A_j$ are disjoint when $i \neq j$.
Set $\B = [ \A_0 \cdots \A_{m-1} ]$.
Then $\B$ has $n$ rows and $\sum_{c=0}^{m-1} k_c$ columns, and for each $0 \leq r < n$, there are $w_r$ different symbols in row $r$.
To show that $\B$ is a  $\DHHF(n; \sum_{c=0}^{m-1} k_c, (w_0, \dots , w_{n-1}), n+d,p')$, it  suffices to show that every partition of $(n+d)$ columns into $p'$ classes is separated.

Consider a $(n+d)$-set $T$ of column indices and a partition $\mathcal{T}$ of $T$ into $p'$ classes.
For $0 \leq c < m$, let $\ell_c$ be the number of columns of $\A_c$ in $T$, and let $\mathcal{T}_c$ be the restriction of $\mathcal{T}$ to the columns of $\A_c$.
Because every two of the $\{\A_c: 0 \leq c < m\}$ share no symbols, it suffices to show that there is some fixed row of $\B$ that separates each partition $\mathcal{T}_c$ for $0 \leq c < m$.
Let $\pi$ be the largest element in $\{\ell_c: 0 \leq c < m\}$, let $L = \{ c : \ell_c \geq 2\}$, and let $\nu=|L|$.
Note that $\A_c$ trivially separates $\mathcal{T}_c$ for each $c \in \{0,\ldots,m\}\setminus L$.

If $\nu \leq d$, the $(n,m,d)$-covering contains a set $P_r$ with $L \subseteq P_r$.
Then in row $r$, for each $c \in L$, $\A_c$ contains all distinct symbols and therefore separates the partition $\mathcal{T}_c$.

So suppose that $\nu > d$.
Then, for each $0 \leq c < m$, $\ell_c \leq n-d$ and hence $\mathcal{T}_c$ has at most $\min(p',n-d) \leq p$ nonempty classes.
By Lemma \ref{fractaldhhf}, for each $c \in L$, $\mathcal{T}_c$ can fail to be separated in at most $\ell_c -1$  rows of $\A_c$ because $\A_c$ is a fractal \DHHF\ for $p$ parts.
Because $\nu > d$, $\sum_{h=0 }^{m-1} \max(0,\ell_h-1) \leq n+d-\nu < n$ and so at least one row of $\B$ separates each partition $\mathcal{T}_c$ for $0 \leq c < m$.
\end{proof}

We employ an easy variant of Theorem \ref{blackburn} repeatedly:

\begin{lemma}\label{varbb}
Suppose that $d \geq 1$ and a \PHHF$(n;\kappa,(w_1,\dots,w_n),n+d)$ exists.
\begin{enumerate}
    \item[\textup{(i)}]
Whenever $\alpha, k  \geq 1$, a \PHHF$(n+\alpha;\kappa+\alpha k,(w_1+\alpha k)^1 \cdots (w_n+\alpha k)^1 (\kappa + (\alpha-1)k+1)^{\alpha}, n+d+2\alpha)$ exists.
    \item[\textup{(ii)}]
In particular, whenever a \PHF$(n;\kappa,w,n+d)$ exists, a \PHF$(n+\alpha; \kappa  + \alpha (\kappa - w+1), \kappa + (\alpha-1) (\kappa-w+1)+1, n+d+2\alpha)$ exists.
\end{enumerate}
\end{lemma}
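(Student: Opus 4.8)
The plan is to prove part (i) by induction on $\alpha$, reducing everything to a single-step construction that raises the number of rows by one and the strength by two. Concretely, I would first establish the following single step: given a \PHHF$(n;\kappa,(w_1,\dots,w_n),n+d)$ with $d\ge 1$ and an integer $k\ge 1$, there is a \PHHF$(n+1;\kappa+k,(w_1+k,\dots,w_n+k,\kappa+1),n+d+2)$. This is the promised variant of Theorem \ref{blackburn}: it is a two-block column construction, one block being an extension of the given family and the other $k$ fresh singleton columns. Unlike Theorem \ref{blackburn} it also \emph{appends a new row}, and since its ingredient would have strength $n+d\ne n+1$ it cannot arise from a literal application of that theorem; so I would build it directly. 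Part (ii) then follows by specializing part (i) to the homogeneous case with a particular value of $k$.

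For the single step, let $\D$ be the given family on columns $A=\{0,\dots,\kappa-1\}$ and form $\D'$ on $n+1$ rows and columns $A\cup B$, where $B=\{\kappa,\dots,\kappa+k-1\}$. On rows $1\le i\le n$ I copy $\D$ on $A$ and place on $B$ a set of $k$ symbols that are distinct from one another and from every symbol used by row $i$ on $A$; this makes row $i$ use $w_i+k$ symbols and guarantees that in row $i$ no column of $B$ shares a symbol with any other column. On the new row $n+1$ I place $\kappa$ distinct symbols on $A$ and one further symbol on all of $B$, so row $n+1$ uses $\kappa+1$ symbols and two columns collide in it only if both lie in $B$. To verify strength $n+d+2$, take any $T$ with $|T|=n+d+2$ and set $b=|T\cap B|$ and $a=|T\cap A|=n+d+2-b$. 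If $b\le 1$ then row $n+1$ assigns distinct symbols to all of $T$. If $b\ge 2$ then $a\le n+d$, so $\D$ separates the $a$ columns of $T\cap A$ in some row $i$; in that row of $\D'$ the $B$-columns of $T$ are automatically distinct from everything else, so all of $T$ is separated. Here I use that a \PHHF\ of strength $n+d$ separates every set of at most $n+d$ of its columns (extend such a set to size $n+d$ and restrict), valid in the non-degenerate range $\kappa\ge n+d$.

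Iterating the single step $\alpha$ times and tracking parameters yields part (i). After $j$ steps the family has $n+j$ rows, $\kappa+jk$ columns, and strength $n+d+2j$, and each original row has gained $k$ symbols per step, reaching $w_i+\alpha k$. The one count to check with care is that of the appended rows: the row born at step $j$ starts with $\kappa+(j-1)k+1$ symbols and then gains $k$ in each of the later $\alpha-j$ steps, for a final total $\kappa+(j-1)k+1+(\alpha-j)k=\kappa+(\alpha-1)k+1$, independent of $j$; thus all $\alpha$ new rows end with $\kappa+(\alpha-1)k+1$ symbols, exactly the exponent-$\alpha$ block in the statement. For part (ii) I specialize to $w_1=\cdots=w_n=w$ and set $k=\kappa-w+1$; then $w+\alpha k=\kappa+(\alpha-1)k+1$, so every row uses the same number of symbols and the family is a \PHF, with $\kappa+\alpha(\kappa-w+1)$ columns and $\kappa+(\alpha-1)(\kappa-w+1)+1$ symbols, as claimed.

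The one genuinely delicate point is the case split in the single step: the construction works precisely because $b\ge 2$ forces $a\le n+d$, matching the strength of $\D$, while $b\le 1$ is exactly the range the new all-distinct-on-$A$ row can absorb. Getting the two blocks' symbol budgets to meet at this boundary, and then confirming that repeated application telescopes the appended-row counts to the common value $\kappa+(\alpha-1)k+1$, is where the bookkeeping must be done carefully; the separation verification itself is routine once the blocks are in place.
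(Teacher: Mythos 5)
Your proposal is correct and follows essentially the same route as the paper: reduce (ii) to (i), reduce (i) to the single step $\alpha=1$, and realize that step as the concatenation of the given \PHHF\ (with an appended all-distinct row) and a block of $k$ columns that are all-distinct in the old rows and constant in the new row, with the identical case split on how many columns of the new block lie in $T$. Your explicit telescoping of the appended rows' symbol counts, and your flagging of the implicit non-degeneracy assumption $\kappa \geq n+d$, are details the paper leaves to the reader, but the construction and verification coincide with the paper's proof.
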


\begin{proof}
Statement (ii) follows from (i) by setting $w_1 = \cdots = w_n = w$ and  $k= \kappa + 1 - w$, so it suffices to prove (i).
Furthermore,  we only need to deal with the case where $\alpha=1$ because the remainder of the result  follows by induction.

Append a row with $\kappa$ distinct symbols to the \PHHF$(n;\kappa,(w_1,\dots,w_n),n+d)$ to form $\A_0$.
Form an $(n+1) \times k$ array $\A_1$ in which row $n+1$
contains $k$ occurrences of a single symbol, all other rows contain distinct symbols, and the sets of symbols in $\A_0$ and $\A_1$ are disjoint.
We claim that $\B = [\A_0 \A_1]$ is the required \PHHF.

Consider a $(n+d+2)$-set $T$ of column indices.
If $T$ contains at most one column of $\A_1$, then $T$ is separated by the last row of $\B$.
Otherwise, the restriction of $T$ to the columns of $\A_0$ contains at most $n+d$ columns and so is separated by some row $r$ of $\A_0$ other than the last.
Then row $r$ of $\B$ separates $T$.
\end{proof}

\section{Applications}

Between them, Theorem \ref{blackburn} and Lemma \ref{varbb} provide a  flexible framework for constructing \DHHF s.
In Lemmas \ref{dgen}--\ref{5-2} we give more concrete applications of these two results to producing \PHF s and \PHHF s with strength larger than their number of rows.
We conclude the section by considering the asymptotic ratio of columns to symbols in large \PHF s constructed by these lemmas.

We begin by choosing the covering in Theorem \ref{blackburn} to consist of all $d$-subsets of an $m$-set, an $( \binom{m}{d}, m, d)$-covering.

\begin{lemma}\label{dgen}
Let $m> d \geq 1$ be integers.
Suppose that a fractal
\[
\PHHF(\tbinom{m-1}{d};\kappa,(w_0,\dots,w_{\binom{m-1}{d}-1}),\tbinom{m-1}{d})
\]
exists.
Let $\sigma$ be the sum of the $m-d$ largest elements in $\{w_i:0\leq i \leq \binom{m-1}{d}-1\}$.
Then a \PHF$(\binom{m}{d};m\kappa,d\kappa + \sigma, \binom{m}{d}+d)$ exists.
\end{lemma}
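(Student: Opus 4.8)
The plan is to apply Theorem~\ref{blackburn} with the $(\binom{m}{d},m,d)$-covering $\mathcal{P}=\{P_0,\dots,P_{n-1}\}$ consisting of all $d$-subsets of $\{0,\dots,m-1\}$, so that $n=\binom{m}{d}$. Condition~(2) in the definition of a covering holds trivially, since each $d$-set is itself a block, and as each point lies in $\binom{m-1}{d-1}$ of the $d$-subsets, the covering has the uniform type $\rho_0=\cdots=\rho_{m-1}=\binom{m-1}{d-1}$. By Pascal's identity $n-\rho_c=\binom{m}{d}-\binom{m-1}{d-1}=\binom{m-1}{d}$, which is precisely the number of rows of the given fractal PHHF. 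I would therefore build each ingredient $\A_c$ by adjoining $\rho_c$ all-distinct rows to a copy of the given PHHF: since fractality is invariant under row permutations and the all-distinct row in Lemma~\ref{addfractal} may be chosen freely, a $\rho_c$-fold application of Lemma~\ref{addfractal} shows the result to be a $\rho_c$-fractal \PHHF$(n;\kappa,\cdot,n)$, and I place its $\rho_c$ all-distinct rows in exactly the positions $\{r:c\in P_r\}$ demanded by Theorem~\ref{blackburn}, leaving the positions $\{r:c\notin P_r\}$ to carry the rows of the given PHHF.

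Since $p=n\ge n-d$ for every ingredient, Theorem~\ref{blackburn} then yields an array of strength $n+d=\binom{m}{d}+d$ with separation parameter $p'=n+d$; as $p'$ equals the strength, this array is a \PHHF. It has $\sum_{c}k_c=m\kappa$ columns, and its $r$th symbol count is $w_r=\sum_{c\in P_r}\kappa+\sum_{c\notin P_r}w_{j(r,c)}=d\kappa+\sum_{c\notin P_r}w_{j(r,c)}$, where $w_{j(r,c)}$ is the number of symbols in the row of the given PHHF placed in position $r$ of $\A_c$. To reach the uniform bound $w_r\le d\kappa+\sigma$ it suffices to arrange that, for each output row $r$, the $m-d$ indices $\{j(r,c):c\notin P_r\}$ are \emph{distinct}, for then their contribution is a sum of $m-d$ distinct entries of $(w_0,\dots,w_{\binom{m-1}{d}-1})$, hence at most $\sigma$; viewing the result as homogeneous with $d\kappa+\sigma$ symbols per row gives a \PHF$(\binom{m}{d};m\kappa,d\kappa+\sigma,\binom{m}{d}+d)$.

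The crux is to choose the indices $j(r,c)$ so that both (i) for each $c$ the map $r\mapsto j(r,c)$ is a bijection from $\{r:c\notin P_r\}$ onto the $\binom{m-1}{d}$ rows of the given PHHF (so that $\A_c$ genuinely is a copy of it), and (ii) for each $r$ the map $c\mapsto j(r,c)$ on $\{c:c\notin P_r\}$ is injective. I plan to realise both requirements at once as a proper edge-colouring, with colour set the row-index set of the given PHHF, of the bipartite ``non-incidence'' graph $H$ whose parts are the $d$-subsets $\{P_r\}$ and the points $\{0,\dots,m-1\}$, in which $P_r$ is joined to $c$ exactly when $c\notin P_r$. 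Each point has degree $\binom{m-1}{d}$ and each $d$-subset has degree $m-d$, and since $\binom{m-1}{d}\ge m-d$ the maximum degree of $H$ is $\binom{m-1}{d}$. By K\"onig's edge-colouring theorem the chromatic index of a bipartite graph equals its maximum degree, so $H$ admits a proper colouring in $\binom{m-1}{d}$ colours; such a colouring uses every colour at each degree-$\binom{m-1}{d}$ point (giving the bijection~(i)) and distinct colours at each $d$-subset (giving the injectivity~(ii)). Reading $j(r,c)$ off this colouring supplies all the ingredients and completes the construction.

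The step I expect to be the main obstacle is exactly this last one: simultaneously meeting the ``bijection within each $\A_c$'' requirement and the ``distinctness across the blocks missing a given output row'' requirement that drives the bound $\sigma$. Recognising these two demands as a single bipartite edge-colouring---and checking the elementary inequality $\binom{m-1}{d}\ge m-d$ so that the point-degree dominates---is what makes K\"onig's theorem applicable; everything else is routine bookkeeping with Theorem~\ref{blackburn} and Lemma~\ref{addfractal}.
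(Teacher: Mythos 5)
Your proposal is correct and follows essentially the same route as the paper's proof: the all-$d$-subsets covering, the bipartite non-incidence graph between the $\binom{m}{d}$ blocks and the $m$ points, a proper edge-colouring with $\binom{m-1}{d}$ colours (the paper asserts this from the degree count, which is exactly K\"onig's theorem that you cite explicitly), and then reading the row placement of the ingredient $\A_c$'s off the colours before applying Theorem~\ref{blackburn}. Your added care about the bijection/injectivity dichotomy at the two sides of the bipartition, and the explicit appeal to Lemma~\ref{addfractal} for the $\rho_c$-fractal property, only make explicit what the paper leaves implicit.
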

\begin{proof}
Let $\mathsf{A}$ be the $\PHHF(\tbinom{m-1}{d};\kappa,(w_0,\dots,w_{\binom{m-1}{d}-1}),\tbinom{m-1}{d})$. Take the $(\binom{m}{d},m,d)$-covering $\{P_0, \dots, P_{\binom{m}{d}-1}\}$ in which the sets are all of the $d$-sets of $\{0,\ldots,m-1\}$.
This covering has $\rho_c = \binom{m-1}{d-1}$ for all $0 \leq c < m$.
Form a bipartite graph $G$ with vertex set $\{x_0, \dots, x_{\binom{m}{d}-1}\} \cup \{y_0, \dots, y_{m-1}\}$, placing an edge between $x_r$ and $y_c$ when the $r$th $d$-set  does not contain the element $c$. Note $\deg_G(x_i)=m-d$ for $0 \leq i < \binom{m}{d}$ and $\deg_G(y_i)=\binom{m-1}{d}$ for $0 \leq i < m$. So we can properly edge colour $G$ with $\binom{m-1}{d}$ colours $\{0,\dots,\binom{m-1}{d}-1\}$.
Now apply Theorem \ref{blackburn}. For each $0 \leq c <m$, use as an ingredient the $\binom{m-1}{d-1}$-fractal $\PHHF$ $\mathsf{A}_c$ obtained from $\mathsf{A}$ by adding $\binom{m-1}{d-1}$ rows of distinct symbols and rearranging the rows in such a way that, when edge $\{x_r,y_c\}$ of $G$ has colour $\ell$, row $\ell$ of $\mathsf{A}$ (with $w_\ell$ symbols) is row $r$ of $\mathsf{A}_c$. For $0 \leq r < \binom{m}{d}$, row $r$ of the resulting $\PHF$ has at most $d\kappa + \sigma$ symbols because $m-d$ distinct colours occur at the vertex $x_r$ of $G$.
\end{proof}

Two applications of  Lemma \ref{dgen}, with $d=1$ and $d=n-1$, are of particular interest.

\begin{lemma}\label{d=1}
When a fractal \PHHF$(n-1;\kappa,(w_1,\dots,w_{n-1}),n-1)$ exists, a \PHF$(n;n\kappa,\kappa+\sum_{i=1}^{n-1}w_i,n+1)$ exists.
\end{lemma}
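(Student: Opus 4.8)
The plan is to obtain this result directly as the $d=1$ instance of Lemma~\ref{dgen}, taking $m=n$. First I would substitute $d=1$ and $m=n$ into the hypothesis of Lemma~\ref{dgen}. That hypothesis asks for a fractal $\PHHF(\binom{m-1}{d};\kappa,(w_0,\dots,w_{\binom{m-1}{d}-1}),\binom{m-1}{d})$; since $\binom{n-1}{1}=n-1$, this is exactly the fractal $\PHHF(n-1;\kappa,(w_1,\dots,w_{n-1}),n-1)$ assumed in the present statement, after the cosmetic relabelling of the weight indices from $0,\dots,n-2$ to $1,\dots,n-1$. Concretely, the covering used inside Lemma~\ref{dgen} becomes the $(\binom{n}{1},n,1)$-covering whose blocks are the $n$ singleton subsets of $\{0,\dots,n-1\}$.

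The key observation is that the quantity $\sigma$ of Lemma~\ref{dgen}, defined as the sum of the $m-d$ largest weights, simplifies completely in this case. With $m-d=n-1$ we are summing the $n-1$ largest among the $n-1$ available weights $w_1,\dots,w_{n-1}$, so $\sigma=\sum_{i=1}^{n-1}w_i$ irrespective of their relative sizes. Substituting $d=1$ and $m=n$ into the conclusion of Lemma~\ref{dgen} then produces a \PHF\ with $\binom{n}{1}=n$ rows, $m\kappa=n\kappa$ columns, $d\kappa+\sigma=\kappa+\sum_{i=1}^{n-1}w_i$ symbols, and strength $\binom{n}{1}+d=n+1$. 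This is precisely the asserted \PHF$(n;n\kappa,\kappa+\sum_{i=1}^{n-1}w_i,n+1)$.

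I expect essentially no obstacle beyond this bookkeeping: the entire combinatorial content is already packaged in Lemma~\ref{blackburn} and its specialization Lemma~\ref{dgen}, and the $d=1$ case is simply the remark that choosing all singletons of an $n$-set as the covering forces every one of the $n-1$ ingredient weights to contribute to $\sigma$. The only point meriting a moment's care is confirming the index conventions align, so that the $n-1$ weights of the ingredient fractal \PHHF\ are exactly those summed in $\sigma$; once this is checked the lemma follows immediately.
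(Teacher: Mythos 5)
Your proposal is correct and coincides with the paper's own proof, which reads in its entirety ``Apply Lemma \ref{dgen} with $(m,d)=(n,1)$''; your verification that $\sigma=\sum_{i=1}^{n-1}w_i$ (since the $m-d=n-1$ largest weights are all of them) and that the parameters $\binom{n}{1}=n$, $m\kappa=n\kappa$, $d\kappa+\sigma=\kappa+\sum_{i=1}^{n-1}w_i$, and $\binom{n}{1}+d=n+1$ come out as claimed is exactly the bookkeeping the paper leaves implicit.
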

\begin{proof}
Apply Lemma \ref{dgen} with $(m,d)=(n,1)$.
\end{proof}

\begin{lemma} \label{d=n-1}
For all $n \geq 2$ and $\kappa \geq 1$, a \PHF$(n;n\kappa,(n-1)\kappa+1,2n-1)$ exists.
\end{lemma}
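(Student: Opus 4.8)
The plan is to invoke Lemma~\ref{dgen} with the parameter choice $(m,d)=(n,n-1)$, for which the output parameters match the claim exactly. First I would verify the arithmetic: with these values the constructed \PHF\ has $\binom{n}{n-1}=n$ rows, $m\kappa=n\kappa$ columns, and strength $\binom{m}{d}+d=\binom{n}{n-1}+(n-1)=2n-1$, while its number of symbols is $d\kappa+\sigma=(n-1)\kappa+\sigma$. So everything reduces to arranging that $\sigma=1$. The hypotheses $m>d\geq 1$ of Lemma~\ref{dgen} hold precisely because $n\geq 2$, and $\kappa\geq 1$ is assumed.

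The ingredient that Lemma~\ref{dgen} requires is a fractal $\PHHF(\binom{m-1}{d};\kappa,(w_0,\dots,w_{\binom{m-1}{d}-1}),\binom{m-1}{d})$, and here $\binom{m-1}{d}=\binom{n-1}{n-1}=1$, so I need only a fractal $\PHHF(1;\kappa,(w_0),1)$. I would take the single row consisting of $\kappa$ copies of one symbol, so that $w_0=1$. This is vacuously a perfect hash family, since strength $1$ imposes no separation condition, and it is fractal because fractality is immediate for $t\leq 2$. As $m-d=1$, the quantity $\sigma$ is just the single largest $w_i$, namely $w_0=1$, giving exactly $(n-1)\kappa+1$ symbols as required.

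I do not expect any genuine obstacle; the only subtlety is recognising that the required ingredient is degenerate (one all-constant row) and confirming that it still satisfies the definitions fed into Lemma~\ref{dgen}. As a transparency check I would unwind the construction: with $(m,d)=(n,n-1)$ the bipartite graph in the proof of Lemma~\ref{dgen} is a perfect matching, so each ingredient $\mathsf{A}_c$ places its constant row in position $c$ and distinct symbols in all other rows. The resulting array, on columns indexed by $\{0,\dots,n-1\}\times\{0,\dots,\kappa-1\}$, then has the property that in row $r$ the $\kappa$ columns of block $r$ all carry one shared symbol while every other column is a singleton. A direct verification follows by pigeonhole: given any $2n-1$ columns distributed over the $n$ blocks, some block contains at most one of them, since otherwise the total would be at least $2n$; the row indexed by that block makes all $2n-1$ chosen columns pairwise distinct. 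This direct argument could replace the appeal to Lemma~\ref{dgen} entirely, and I would likely record it as a remark for clarity.
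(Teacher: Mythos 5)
Your proposal is correct and takes essentially the same approach as the paper, whose entire proof is to apply Lemma~\ref{dgen} with $(m,d)=(n,n-1)$ while noting that the ingredient has one row and strength $1$; your identification of that ingredient as a single constant row (so that $w_0=1$ and hence $\sigma=1$) supplies exactly the detail the paper leaves implicit. The direct pigeonhole verification you append is also valid, but it is a supplementary remark rather than a different route.
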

\begin{proof}
Apply Lemma \ref{dgen} with $(m,d)=(n,n-1)$. (The \PHHF\ ingredient has one row and  strength 1.)
\end{proof}

Next we give other applications of Theorem \ref{blackburn} and Lemma \ref{varbb} to handle cases with $d \in \{n-2,n-3,n-4,n-5\}$.

\begin{lemma}\label{d=n-2}
Suppose that a \PHHF$(2;\kappa,(w_1,w_{2}),2)$ exists and  $n \geq 3$. Then
\begin{enumerate}
\item[\textup{(i)}] When $k \geq 1$, a \PHHF$(n;3\kappa+(n-3)k,(\kappa+(n-3)k+w_1+w_2)^3 (3\kappa+(n-4)k+1)^{n-3},2n-2)$ exists.
\item[\textup{(ii)}] When $w_1 + w_2 \leq 2\kappa$, a \PHF$(n;(2n-3)\kappa-(n-3)(w_1+w_2-1),(2n-5)\kappa -(n-4)(w_1+w_2-1)+1,2n-2)$ exists.
\end{enumerate}
\end{lemma}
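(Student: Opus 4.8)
The plan is to assemble the desired strength-$(2n-2)$ family from the given strength-$2$ ingredient in two moves: a single application of Lemma~\ref{d=1} to lift the strength from $2$ to $4$, followed by Lemma~\ref{varbb}(i) to add rows and raise the strength in steps of two. The one preliminary observation I would record is that every \PHHF$(2;\kappa,(w_1,w_2),2)$ is automatically fractal, since fractality holds by definition whenever the strength is at most $2$; thus the hypothesis of Lemma~\ref{d=1} is met for free.

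To prove (i) I would apply Lemma~\ref{d=1} with its output parameter equal to $3$ to the (fractal) \PHHF$(2;\kappa,(w_1,w_2),2)$, obtaining a \PHF$(3;3\kappa,\kappa+w_1+w_2,4)$. Viewed as a \PHHF$(3;3\kappa,(\kappa+w_1+w_2)^3,4)$ of strength $3+1$, this is precisely the $n=3$ instance of (i), the second block of rows being empty there. For $n\geq 4$ I would feed it into Lemma~\ref{varbb}(i) with $\alpha=n-3\geq 1$ and the given $k\geq 1$. Direct substitution (ingredient column count $3\kappa$, ingredient strength $4$) yields $3+(n-3)=n$ rows, $3\kappa+(n-3)k$ columns, strength $4+2(n-3)=2n-2$, three rows of size $(\kappa+w_1+w_2)+(n-3)k$ coming from the original rows, and $n-3$ appended rows of size $3\kappa+(n-4)k+1$. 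These are exactly the parameters claimed in (i).

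For (ii) I would specialise the free parameter $k$ in (i) so that the two row-sizes coincide and the family becomes homogeneous. Equating $\kappa+(n-3)k+w_1+w_2$ with $3\kappa+(n-4)k+1$ forces $k=2\kappa-w_1-w_2+1$, and the hypothesis $w_1+w_2\leq 2\kappa$ is exactly the inequality $k\geq 1$ needed to invoke Lemma~\ref{varbb}(i). Substituting this $k$ into the column count and the (now common) row-size and simplifying gives $(2n-3)\kappa-(n-3)(w_1+w_2-1)$ columns and $(2n-5)\kappa-(n-4)(w_1+w_2-1)+1$ symbols per row, which is the \PHF of (ii).

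The argument carries no real conceptual difficulty beyond Lemmas~\ref{d=1} and~\ref{varbb}; the substantive point, and the only genuine obstacle, is the admissibility condition $k\geq 1$. In (i) it is assumed; in (ii) the crux is recognising that the homogenising value $k=2\kappa-w_1-w_2+1$ is at least $1$ precisely when $w_1+w_2\leq 2\kappa$, so that the stated hypothesis is exactly what makes the specialisation legal. I would also take care that the boundary case $n=3$ (where $\alpha=n-3=0$ is not allowed in Lemma~\ref{varbb}) is supplied directly by Lemma~\ref{d=1}, with the remaining arithmetic being routine verification.
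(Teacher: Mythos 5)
Your proposal is correct and follows exactly the paper's own route: establish the $n=3$ case of (i) via Lemma~\ref{d=1} (the strength-$2$ ingredient being trivially fractal), extend to $n\geq 4$ with Lemma~\ref{varbb}(i) taking $\alpha=n-3$, and derive (ii) from (i) by the specialisation $k=2\kappa+1-w_1-w_2$, with $w_1+w_2\leq 2\kappa$ guaranteeing $k\geq 1$. Your write-up merely makes explicit the parameter checks the paper leaves implicit.
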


\begin{proof}
It suffices to prove (i) because (ii) follows from (i) by setting $k = 2\kappa +1-w_1-w_2$. Lemma \ref{d=1} establishes (i) when $n=3$. Apply Lemma \ref{varbb}(i) with $\alpha = n-3$.
\end{proof}

\begin{lemma}\label{d=n-3}
Suppose that a \PHHF$(2;\kappa,(w_1,w_{2}),2)$ exists and  $n \geq 6$. Then
\begin{enumerate}
\item[\textup{(i)}] When $k \geq 1$, a \PHHF$(n;6\kappa+(n-6)k,(4\kappa+(n-6)k+w_1+w_2)^6 (6\kappa+(n-7)k+1)^{n-6},2n-3)$ exists.
\item[\textup{(ii)}] When $w_1 + w_2 \leq 2\kappa$, a \PHF$(n;(2n-6)\kappa-(n-6)(w_1+w_2-1),(2n-8)\kappa- (n-7)( w_1+w_2-1)+1,2n-3)$ exists.
\end{enumerate}
\end{lemma}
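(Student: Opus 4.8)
The plan is to imitate the proof of Lemma~\ref{d=n-2}: reduce (ii) to (i), prove (i) in the base case $n=6$, and extend to all $n\geq 6$ by one application of Lemma~\ref{varbb}(i). For the reduction I would set $k = 2\kappa + 1 - w_1 - w_2$, which is at least $1$ exactly because (ii) assumes $w_1 + w_2 \leq 2\kappa$. Substituting this $k$ into the two symbol counts of (i), namely $4\kappa + (n-6)k + w_1 + w_2$ and $6\kappa + (n-7)k + 1$, their difference is $2\kappa - k + 1 - w_1 - w_2 = 0$, so the \PHHF\ of (i) becomes homogeneous; simplifying the column count $6\kappa + (n-6)k$ and the common symbol count then reproduces the \PHF\ parameters asserted in (ii). This is a routine calculation.

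For the extension, (i) at $n=6$ asserts a homogeneous \PHF$(6;6\kappa,4\kappa+w_1+w_2,9)$, which I read as a \PHHF$(6;6\kappa,(4\kappa+w_1+w_2)^6,9)$ of strength $6+3$. Applying Lemma~\ref{varbb}(i) with $\alpha = n-6$, the six original rows acquire $4\kappa+w_1+w_2+(n-6)k$ symbols, the $n-6$ appended rows acquire $6\kappa+(n-7)k+1$ symbols, the column count becomes $6\kappa+(n-6)k$, and the strength rises to $9+2(n-6)=2n-3$; these match (i) exactly, so all of (i) follows from the base case.

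The crux, and the step I expect to be hardest, is constructing the base \PHF$(6;6\kappa,4\kappa+w_1+w_2,9)$: unlike the $d=n-2$ case it does not come from a single use of Lemma~\ref{dgen}, since no $(m,d')$ gives $\binom{m}{d'}=6$ with strength $\binom{m}{d'}+d'=9$. Instead I would invoke Theorem~\ref{blackburn} with $n=6$, $d=3$, and $m=6$, using a covering built from two disjoint triangles. Index both the six ingredients and the six rows by $\{0,\dots,5\}$, assign ingredient $c$ an edge-pair $\pi_c$ so that $\{\pi_0,\pi_1,\pi_2\}$ are the edges of the triangle on rows $\{0,1,2\}$ and $\{\pi_3,\pi_4,\pi_5\}$ those on $\{3,4,5\}$, and set $P_r=\{c:r\notin\pi_c\}$. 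Each row then lies in exactly two pairs, so every block has size $4$ and the type is $(4^6)$; and because any three edges drawn from two triangles cover at most five of the six vertices, every triple of ingredients is omitted by some row, making $\{P_0,\dots,P_5\}$ a genuine $(6,6,3)$-covering. For each $c$ I would take $\A_c$ to be the given \PHHF$(2;\kappa,(w_1,w_2),2)$ placed on the two rows of $\pi_c$, with the remaining four rows filled by $\kappa$ distinct symbols; the four all-distinct rows make $\A_c$ a $4$-fractal \PHHF$(6;\kappa,\dots,6)$ satisfying the distinctness requirement of Theorem~\ref{blackburn}, as one checks from Lemma~\ref{fractaldhhf}. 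Orienting each triangle cyclically, so that every row is the head of one incident edge and the tail of the other, and assigning the $w_1$-row of the embedded \PHHF\ to heads and the $w_2$-row to tails, gives each row a contribution of $w_1+w_2$ from its two incident ingredients and $4\kappa$ from the four ingredients on which its row is distinct. Summing, the output has $6\kappa$ columns, $4\kappa+w_1+w_2$ symbols in every row, and strength $6+3=9$; since the ingredients are full \PHHF s, Theorem~\ref{blackburn} delivers a \PHHF, that is, the required \PHF. The real obstacle is thus purely combinatorial: exhibiting a covering that simultaneously covers all triples, keeps every row in the same number of pairs (forcing a homogeneous symbol count), and admits a balanced orientation of the embedded two-row \PHHF s; the two-triangle configuration is precisely what reconciles all three demands.
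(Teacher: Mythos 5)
Your proposal is correct and follows essentially the same route as the paper: reduce (ii) to (i) via $k = 2\kappa+1-w_1-w_2$, reduce (i) to the case $n=6$ by Lemma \ref{varbb}(i) with $\alpha=n-6$, and realize the base case through Theorem \ref{blackburn} with a $(6,6,3)$-covering. Your two-disjoint-triangles configuration, with the cyclic orientation placing one $w_1$-row and one $w_2$-row in each row of the array, is exactly the structure encoded in the paper's $6\times 6$ table of $4$-fractal ingredients.
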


\begin{proof}
It suffices to prove (i) because (ii) follows from (i) by setting $k = 2\kappa +1-w_1-w_2$. By Lemma \ref{varbb}(i) with $\alpha=n-6$, it suffices to treat the case when $n=6$.
Form the 4-fractal  \PHHF s using the numbers of symbols in the columns given:
\begin{center}
\begin{tabular}{|c|c|c|c|c|c|}
$w_1$ & $w_2$ & $\kappa$& $\kappa$& $\kappa$& $\kappa$  \\
$\kappa$&$w_1$ & $w_2$ &  $\kappa$& $\kappa$& $\kappa$  \\
$w_2$ &$\kappa$&$w_1$ &   $\kappa$& $\kappa$& $\kappa$  \\
$\kappa$& $\kappa$& $\kappa$ & $w_1$ & $w_2$ & $\kappa$  \\
$\kappa$& $\kappa$& $\kappa$ &$\kappa$&$w_1$ & $w_2$   \\
$\kappa$& $\kappa$& $\kappa$ &$w_2$ &$\kappa$&$w_1$   \\
\end{tabular}
\end{center}
Let $P_0, \dots, P_5$ be the indices of the $\kappa$ entries in the six rows.
This yields  the $(6,6,3)$-covering.
Apply Theorem \ref{blackburn}.
\end{proof}

\begin{lemma}\label{d=n-4a}
Suppose that a fractal \PHHF$(3;\kappa,(w_1,w_{2},w_{3}),3)$ exists  and  $n \geq 6$. Then
\begin{enumerate}
\item[\textup{(i)}] When $k \geq 1$, a \PHHF$(n;6\kappa+(n-6)k,(3\kappa+(n-6)k+w_1+w_2+w_3)^6 (6\kappa+(n-7)k+1)^{n-6},2n-4)$ exists.
\item[\textup{(ii)}] When $w_1 + w_2+w_3 \leq \kappa$, a \PHF$(n;(3n-12)\kappa-(n-6)(w_1+w_2+w_3-1),(3n-15)\kappa-(n-7)( w_1+w_2+w_3-1)+1,2n-4)$ exists.
\end{enumerate}
\end{lemma}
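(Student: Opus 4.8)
The plan is to follow the template of Lemmas \ref{d=n-2} and \ref{d=n-3}: establish (i) first, reduce the general case to the base case $n=6$ via Lemma \ref{varbb}(i), and dispatch the base case with Theorem \ref{blackburn}; then obtain (ii) from (i) by homogenising. For the reduction of (ii) to (i), I would set $k = 3\kappa + 1 - (w_1+w_2+w_3)$. With this choice the two row-types of (i) coincide: a short computation gives the common symbol count $(3n-15)\kappa - (n-7)(w_1+w_2+w_3-1)+1$ and the column count $(3n-12)\kappa - (n-6)(w_1+w_2+w_3-1)$, exactly the parameters in (ii), while the hypothesis $w_1+w_2+w_3\leq\kappa$ guarantees $k\geq 1$. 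For (i), applying Lemma \ref{varbb}(i) with $\alpha=n-6$ reduces everything to $n=6$, where the strength is $2n-4=8$ and $d=n-4=2$; the target there is a \PHF$(6;6\kappa,\,3\kappa+w_1+w_2+w_3,\,8)$.

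For the base case I would invoke Theorem \ref{blackburn} with a $(6,6,2)$-covering $\{P_0,\dots,P_5\}$ of type $(3,3,3,3,3,3)$, i.e.\ six triples on $\{0,\dots,5\}$ that cover all $15$ pairs with each point in exactly three triples. Such a covering exists; concretely one may take a triple cover of $K_6$ whose excess (the doubly covered edges) is a perfect matching, forcing each point into three triples. For each $c$, the ingredient $\A_c$ is the given fractal \PHHF$(3;\kappa,(w_1,w_2,w_3),3)$ with three rows of fresh, pairwise distinct symbols appended, arranged so that row $r$ of $\A_c$ is a distinct row precisely when $c\in P_r$ (there are exactly $\rho_c=3$ such rows). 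Using Lemma \ref{fractaldhhf}, each $\A_c$ is in fact a fractal \PHHF$(6;\kappa,\dots,6)$: for a partition of $\ell$ columns into $\ell$ classes with $\ell\leq 3$ the three distinct rows contribute three separations and the fractal \PHHF$(3)$ contributes a further $4-\ell$, giving $7-\ell$ in total, while for $\ell\geq 4$ the three distinct rows alone give the required $\geq 7-\ell$. In particular each $\A_c$ is a $3$-fractal \DHHF$(6;\kappa,\dots,6,p)$ with $p=6\geq n-d=4$, so Theorem \ref{blackburn} returns $p'=n+d=8$ and the output is a genuine perfect hash family.

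It remains to secure the homogeneous symbol count $3\kappa+w_1+w_2+w_3$, which requires choosing \emph{which} of the three \PHHF\ rows lands in each non-distinct slot so that every row of $[\A_0\cdots\A_5]$ meets $w_1,w_2,w_3$ once each. To this end I would form the bipartite graph on (rows)$\,\cup\,$(ingredients) with an edge $(r,c)$ whenever $c\notin P_r$; since $|P_r|=\rho_c=3$ for all $r,c$, this graph is $3$-regular, hence properly $3$-edge-colourable, and colouring its edges by the labels $\{w_1,w_2,w_3\}$ yields exactly the required placement—each ingredient uses each \PHHF\ row once, and each row of the combined array accrues $3\kappa+w_1+w_2+w_3$ symbols. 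This step, exhibiting the covering together with a compatible placement of the $w_i$-rows, is the part I expect to be the main obstacle; once it is reduced to the $3$-edge-colouring of a regular bipartite graph it is routine, and the remaining separation checks simply mirror the proof of Lemma \ref{d=n-3}.
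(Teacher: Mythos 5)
Your proposal is correct and follows essentially the same route as the paper: reduce (ii) to (i) via $k = 3\kappa+1-w_1-w_2-w_3$, reduce (i) to the case $n=6$ via Lemma \ref{varbb}(i) with $\alpha=n-6$, and apply Theorem \ref{blackburn} with a $(6,6,2)$-covering of type $(3,3,3,3,3,3)$ whose ingredients are the fractal \PHHF$(3;\kappa,(w_1,w_2,w_3),3)$ with three distinct-symbol rows appended. The paper's explicit covering $P_j=\{j,\,j+1 \bmod 6,\,j+3\bmod 6\}$ and its circulant placement of the rows $w_1,w_2,w_3$ (row $r$ of ingredient $c$ carries $w_1,w_2,w_3$ for $c\equiv r+2,r+4,r+5 \pmod 6$) are exactly an explicit instance of the triple cover of $K_6$ and the proper $3$-edge-colouring whose existence you establish abstractly.
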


\begin{proof}
It suffices to prove (i) because (ii) follows from (i) by setting $k = 3\kappa +1-w_1-w_2-w_3$. By Lemma \ref{varbb}(i) with $\alpha=n-6$ it suffices to treat the case when $n=6$.
We use a $(6,6,2)$-covering.
For $0 \leq j < 6$,  let $P_j = \{ j, j+1 \bmod{6}, j+3 \bmod{6}\}$.
To form the $3$-fractal \PHHF s $\{A_0, \dots, A_5\}$, set \[ v_{rc} = \left \{ \begin{array}{lll}
w_1 & \mbox{if} & 0 \leq r < 6, 0 \leq c < 6, c \equiv r+2 \pmod{6} \\
w_2 & \mbox{if} & 0 \leq r < 6, 0 \leq c < 6, c \equiv r+4 \pmod{6} \\
w_3 & \mbox{if} & 0 \leq r < 6, 0 \leq c < 6, c \equiv r+5 \pmod{6} \\
\kappa & \mbox{if} &  0 \leq c < 6, \mbox{ and }  c \equiv r, r+1, r+3 \pmod{6} \\
\end{array} \right .  \]
Apply Theorem \ref{blackburn}.
\end{proof}


\begin{lemma}\label{d=n-5}
Suppose that a fractal \PHHF$(4;\kappa,(w_1,w_{2},w_{3},w_{4}),4)$ exists  and  $n \geq 7$. Then
\begin{enumerate}
\item[\textup{(i)}] When $k \geq 1$, a \PHHF$(n;7\kappa+(n-7)k,(3\kappa+(n-7)k+w_1+w_2+w_3+w_4)^7 (7\kappa+(n-8)k+1)^{n-7},2n-5)$ exists.
\item[\textup{(ii)}] When $w_1 + w_2+w_3+w_4 \leq 4\kappa$, a   \PHF$(n;(4n-21)\kappa-(n-7)(w_1+w_2+w_3+w_4-1),(4n-25)\kappa- (n-8)( w_1+w_2+w_3+w_4-1)+1,2n-5)$ exists.
\end{enumerate}
\end{lemma}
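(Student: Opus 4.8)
The plan is to follow the template of Lemma~\ref{d=n-4a} verbatim, carrying one extra content row through the cyclic construction. As in the earlier lemmas, I would obtain statement (ii) from (i) by setting $k = 4\kappa + 1 - (w_1+w_2+w_3+w_4)$: this choice forces the two row types in (i) to carry equally many symbols (it equates $3\kappa+(n-7)k+w_1+w_2+w_3+w_4$ with $7\kappa+(n-8)k+1$), so the heterogeneous \PHHF\ of (i) collapses to a homogeneous \PHF. The hypothesis $w_1+w_2+w_3+w_4 \le 4\kappa$ is precisely what guarantees $k \ge 1$, and substituting this $k$ into the column count and symbol count of (i) yields the expressions in (ii) after routine simplification.

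To prove (i) it suffices, by Lemma~\ref{varbb}(i) applied with $\alpha = n-7$, to settle the base case $n = 7$; the $\alpha$-fold application turns a \PHHF\ of strength $9$ on seven equal-sized rows into the stated \PHHF\ of strength $2n-5$, introducing the $n-7$ new rows of size $7\kappa+(n-8)k+1$. For $n = 7$ the target strength is $9 = 7+2$, so I would invoke Theorem~\ref{blackburn} with a $(7,7,2)$-covering. I take the cyclic covering $P_j = \{\,j,\ j+1 \bmod 7,\ j+3 \bmod 7\,\}$ for $0 \le j < 7$; since $\{0,1,3\}$ is a planar difference set in $\mathbb{Z}_7$ (its six nonzero differences are all distinct), every pair of elements of $\{0,\dots,6\}$ lies in some $P_j$, giving a valid $(7,7,2)$-covering of constant type $\rho_c = 3$.

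For each $0 \le c < 7$ the ingredient $\mathsf{A}_c$ is to be a $3$-fractal \PHHF$(7;\kappa,(v_{0,c},\dots,v_{6,c}),7)$ built from the given fractal \PHHF$(4;\kappa,(w_1,w_2,w_3,w_4),4)$ by adjoining three rows of distinct symbols. Mirroring Lemma~\ref{d=n-4a}, I set $v_{r,c} = \kappa$ when $c \equiv r,\,r+1,\,r+3 \pmod{7}$ (the three all-distinct rows, placed exactly at the positions with $c \in P_r$ demanded by Theorem~\ref{blackburn}), and assign the four content values $v_{r,c} = w_1,\,w_2,\,w_3,\,w_4$ when $c \equiv r+2,\,r+4,\,r+5,\,r+6 \pmod{7}$ respectively. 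Since the offset sets $\{0,1,3\}$ and $\{2,4,5,6\}$ partition $\mathbb{Z}_7$, each row $r$ of $[\mathsf{A}_0\cdots\mathsf{A}_6]$ accumulates exactly $3\kappa + w_1+w_2+w_3+w_4$ symbols, matching the first row type of (i) at $n=7$. Repeated use of Lemma~\ref{addfractal} shows each $\mathsf{A}_c$ is genuinely $3$-fractal: adjoining an all-distinct row to a fractal \PHHF\ keeps it fractal and raises its fractal count by one, and permuting rows preserves fractality. Finally, because the ingredients are perfect ($p = 7$) and $p = 7 \ge n-d = 5$, Theorem~\ref{blackburn} returns $p' = n+d = 9$, so the output at $n=7$ is a genuine \PHHF\ of strength $9$.

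The hard part will be the bookkeeping of this third step rather than any new idea: one must check simultaneously that the three adjoined rows land precisely in the positions $c \in P_r$ required by Theorem~\ref{blackburn}, that the four content values cycle so every final row meets each $w_i$ exactly once, and that the array is $3$-fractal rather than merely fractal. The cyclic offset pattern handles the first two and Lemma~\ref{addfractal} the third. I expect no difficulty beyond Lemma~\ref{d=n-4a}, the only changes being the extra content offset $+6$ that accommodates $w_4$ and the shift of the difference set from $\mathbb{Z}_6$ to $\mathbb{Z}_7$.
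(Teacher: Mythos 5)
Your proposal is correct and follows essentially the same route as the paper's proof: reduce (ii) to (i) via $k = 4\kappa+1-(w_1+w_2+w_3+w_4)$, reduce (i) to $n=7$ via Lemma~\ref{varbb}(i) with $\alpha = n-7$, and then apply Theorem~\ref{blackburn} with the cyclic $(7,7,2)$-covering $P_j = \{j, j+1, j+3\} \bmod 7$ and exactly the same cyclic assignment of $\kappa, w_1, w_2, w_3, w_4$ to the offsets $\{0,1,3\}$ and $\{2,4,5,6\}$. Your added verifications (the difference-set property of $\{0,1,3\}$, the $3$-fractal check via Lemma~\ref{addfractal}, and the determination $p' = 9$) are details the paper leaves implicit, and they are all correct.
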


\begin{proof}
It suffices to prove (i) because (ii) follows from (i) by setting $k = 4\kappa +1-w_1-w_2-w_3-w_4$. By Lemma \ref{varbb}(i) with $\alpha=n-7$ it suffices to treat the case when $n=7$.
We use a $(7,7,2)$-covering.
When $n=7$, for $0 \leq j < 7$,  let $P_j = \{ j, j+1 \bmod{7}, j+3 \bmod{7}\}$.
To form the $3$-fractal \PHHF s $\{A_0, \dots, A_{6}\}$, set \[ v_{rc} = \left \{ \begin{array}{lll}
w_1 & \mbox{if} & 0 \leq r < 7, 0 \leq c < 7, c \equiv r+2 \pmod{7} \\
w_2 & \mbox{if} & 0 \leq r < 7, 0 \leq c < 7, c \equiv r+4 \pmod{7} \\
w_3 & \mbox{if} & 0 \leq r < 7, 0 \leq c < 7, c \equiv r+5 \pmod{7} \\
w_4 & \mbox{if} & 0 \leq r < 7, 0 \leq c < 7, c \equiv r+6 \pmod{7} \\
\kappa & \mbox{if} &  0 \leq c < 7, \mbox{ and }  c \equiv r, r+1, r+3 \pmod{7} \\
\end{array} \right .  \]
Apply Theorem \ref{blackburn}.
\end{proof}

Finally we treat a special case with $d=2$.

\begin{lemma}\label{5-2}
If there exist
\begin{itemize}
\item a \PHHF$(2;\kappa_2,(v_{1,2},v_{2,2}),2)$,
\item a \PHHF$(2;\kappa_3,(v_{1,3},v_{2,3}),2)$, and
\item a fractal \PHHF$(3;\kappa_1,(v_{1,1},v_{2,1},v_{3,1}),3)$,
\end{itemize}
then
a \PHHF$(5;2\kappa_1+2\kappa_2+\kappa_3,(w_0,\dots,w_4)  ,7)$ exists with
$w_0  =  \kappa_1+2\kappa_2+v_{1,1}+v_{1,3}$,
$w_1  =  \kappa_1+2\kappa_2+v_{1,1}+v_{2,3}$,
$w_2  =  2\kappa_1+\kappa_3+v_{1,2}+v_{2,2}$,
$w_3  =  \kappa_2+\kappa_3+v_{2,1}+v_{3,1}+v_{2,2}$,
$w_4  =  \kappa_2+\kappa_3+v_{1,2}+v_{2,1}+v_{3,1}$.
\end{lemma}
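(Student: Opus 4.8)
The plan is to read this off as the case $n=5$, $d=2$ (hence strength $n+d=7$) of Theorem~\ref{blackburn}, using $m=5$ ingredient arrays and a $(5,5,2)$-covering of type $(2,2,3,3,3)$. First I would manufacture five fractal \DHHF s of strength $5$. From the fractal \PHHF$(3;\kappa_1,(v_{1,1},v_{2,1},v_{3,1}),3)$ I would build two copies $\A_0,\A_1$ by adjoining two rows of $\kappa_1$ distinct symbols, making each a $2$-fractal \DHHF$(5;\kappa_1,\cdot,5,3)$; from the two strength-$2$ ingredients I would build $\A_2,\A_3$ (from the $(2;\kappa_2,(v_{1,2},v_{2,2}),2)$ family) and $\A_4$ (from the $(2;\kappa_3,(v_{1,3},v_{2,3}),2)$ family) by adjoining three rows of distinct symbols each, making them $3$-fractal \DHHF s of strength $5$. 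The total column count is then $2\kappa_1+2\kappa_2+\kappa_3$, as required.

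Next I would choose the interleaving of the adjoined rows so that the all-distinct rows realize a valid covering while the per-row symbol totals hit the prescribed $w_r$. Concretely I would record the matrix of per-row symbol counts $v_{r,c}$,
\[
\begin{array}{c|ccccc}
 & \A_0 & \A_1 & \A_2 & \A_3 & \A_4\\ \hline
0 & v_{1,1} & \kappa_1 & \kappa_2 & \kappa_2 & v_{1,3}\\
1 & \kappa_1 & v_{1,1} & \kappa_2 & \kappa_2 & v_{2,3}\\
2 & \kappa_1 & \kappa_1 & v_{2,2} & v_{1,2} & \kappa_3\\
3 & v_{2,1} & v_{3,1} & \kappa_2 & v_{2,2} & \kappa_3\\
4 & v_{3,1} & v_{2,1} & v_{1,2} & \kappa_2 & \kappa_3
\end{array}
\]
and check that $w_r=\sum_{c}v_{r,c}$ agrees with the five stated values. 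Reading $P_r$ as the set of columns holding a full entry ($\kappa_1$, $\kappa_2$, or $\kappa_3$) in row $r$ gives $P_0=\{1,2,3\}$, $P_1=\{0,2,3\}$, $P_2=\{0,1,4\}$, $P_3=\{2,4\}$, $P_4=\{3,4\}$; a finite check that each of the ten pairs from $\{0,\dots,4\}$ lies in some $P_r$ confirms this is a $(5,5,2)$-covering, and by construction row $r$ of $\A_c$ is all-distinct precisely when $c\in P_r$, with the counts $\rho_c$ equal to $(2,2,3,3,3)$.

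It then remains to verify that each ingredient is fractal for $p=3$ parts, which is what makes Theorem~\ref{blackburn} return the value $p'=n+d=7$ (since $p=3\ge n-d=3$) and hence a \emph{perfect} family. Using Lemma~\ref{fractaldhhf}, for each $\A_c$ I must show every partition of $\ell$ columns into $\min(3,\ell)$ classes is separated by at least $6-\ell$ rows. For $\A_2,\A_3,\A_4$ the three all-distinct rows separate every partition, which settles $\ell\ge3$, the case $\ell=1$ is vacuous, and for $\ell=2$ I additionally use that the two remaining rows form a \PHHF$(2;\cdot,\cdot,2)$ separating any two columns. For $\A_0,\A_1$ the two all-distinct rows, combined with the fact (again via Lemma~\ref{fractaldhhf}) that the underlying fractal \PHHF$(3)$ separates an $\ell$-column, $\min(3,\ell)$-class partition in at least $4-\ell$ of its rows, give the bound $2+(4-\ell)=6-\ell$ for $\ell\le3$ and $2\ge6-\ell$ for $\ell\in\{4,5\}$. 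Applying Theorem~\ref{blackburn} to these five ingredients and this covering yields the claimed \PHHF$(5;2\kappa_1+2\kappa_2+\kappa_3,(w_0,\dots,w_4),7)$.

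I expect the crux to be the middle step: finding one placement of the padding rows that simultaneously encodes a genuine $(5,5,2)$-covering of type $(2,2,3,3,3)$ and drives the five column-sums $w_r$ to their prescribed values. The covering verification is a short finite check and the fractality of the padded ingredients is routine from Lemma~\ref{fractaldhhf}; it is the bookkeeping that aligns the distinct-row pattern with the symbol totals, encoded in the table above, where the real design work lies.
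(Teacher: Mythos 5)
Your proposal is correct and is essentially the paper's own proof: the paper likewise pads the three ingredients to five-row arrays according to a placement table (yours is identical to the paper's up to swapping the two $\kappa_2$-ingredient columns), reads off the $(5,5,2)$-covering from the positions of the all-distinct rows, and applies Theorem~\ref{blackburn}. The only difference is that you spell out details the paper leaves implicit, namely the fractality of the padded ingredients via Lemma~\ref{fractaldhhf} and the computation $p'=n+d=7$ since $p=3\ge n-d$.
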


\begin{proof}
Using a fractal \PHHF$(3;\kappa_1,(v_{1,1},v_{2,1},v_{3,1}),3)$,
a \PHHF$(2;\kappa_2,(v_{1,2},v_{2,2}),2)$, and a \PHHF$(2;\kappa_3,(v_{1,3},v_{2,3}),2)$,
form five \PHHF s on $5$ rows by placing the rows as indicated in each column shown; when $\kappa_i$ is specified, the row is all distinct symbols.

\begin{center}
\begin{tabular}{|c|c|c|c|c|}
$v_{1,1}$ & $\kappa_1$ & $\kappa_2$ &  $\kappa_2$ &  $v_{1,3}$ \\
$\kappa_1$ & $v_{1,1}$ &  $\kappa_2$ &   $\kappa_2$ &  $v_{2,3}$\\
$\kappa_1$ & $\kappa_1$ & $v_{1,2}$ & $v_{2,2}$ & $\kappa_3$ \\
$v_{2,1}$ & $v_{3,1}$ & $v_{2,2}$ &  $\kappa_2$ &  $\kappa_3$\\
$v_{3,1}$ &$v_{2,1}$&  $\kappa_2$ &  $v_{1,2}$ & $\kappa_3$\\
\end{tabular}
\end{center}

Let $P_0, \dots, P_4$ be the indices of the $\kappa$ entries in the five rows.
This yields  the $(5,5,2)$-covering.
Apply Theorem \ref{blackburn}.
\end{proof}

Numerous cases have been handled by Lemma \ref{dgen}.
We could take $m=5$ and $d=2$ to yield \PHF s with 10 rows and strength 12, or $m=5$ and $d=3$ to yield \PHF s with 10 rows and strength 13.
However, Lemma \ref{dgen} need not yield the best result asymptotically, as shown by the $(10,10,2)$-covering with blocks 0169, 2379, 4589, 0178, 2368, 4567, 024, 035, 125, 134.
Using ingredients with $\kappa$ columns on elements $\{0,\dots,5\}$, and $\kappa/2$ on elements $\{6,\dots,9\}$, the number of columns grows like $8\kappa$ while the number of symbols grows like $3\kappa$.

Table \ref{ptable} summarizes  the best asymptotic ratio of columns to symbols in large \PHF s constructed using the lemmas in this section; this extends somewhat a table from \cite{blackburn2000}.
\renewcommand{\tabcolsep}{3pt}

\begin{table}[htb]
\begin{center}
\begin{tabular}{c|cccccccccc}
$n \downarrow d \rightarrow$ & 1 & 2 & 3 & 4 & 5 & 6 & 7 & 8 & 9 \\
\hline
2 & \ref{d=1}: 2 &\\
3 & \ref{d=1}: 3 & \ref{d=n-1}: $\frac{3}{2}$\\
4 & \ref{d=1}: 4 & \ref{d=n-2}: $\frac{5}{3}$& \ref{d=n-1}: $\frac{4}{3}$\\
5 & \ref{d=1}: 5 & \ref{5-2}: $\frac{9}{5}$ & \ref{d=n-2}: $\frac{7}{5}$& \ref{d=n-1}: $\frac{5}{4}$\\
6 & \ref{d=1}: 6 & \ref{d=n-4a}: 2 & \ref{d=n-3}: $\frac{3}{2}$ & \ref{d=n-2}: $\frac{9}{7}$& \ref{d=n-1}: $\frac{6}{5}$\\
7 & \ref{d=1}: 7 &  \ref{d=n-5}: $\frac{7}{3}$ & \ref{d=n-4a}: $\frac{3}{2}$& \ref{d=n-3}: $\frac{4}{3}$& \ref{d=n-2}: $\frac{11}{9}$& \ref{d=n-1}: $\frac{7}{6}$\\
8 & \ref{d=1}: 8 & & \ref{d=n-5}: $\frac{11}{7}$ &  \ref{d=n-4a}: $\frac{4}{3}$& \ref{d=n-3}: $\frac{5}{4}$& \ref{d=n-2}: $\frac{13}{11}$& \ref{d=n-1}: $\frac{8}{7}$\\
9 & \ref{d=1}: 9 & & & \ref{d=n-5}: $\frac{15}{11}$ &  \ref{d=n-4a}: $\frac{5}{4}$& \ref{d=n-3}: $\frac{6}{5}$& \ref{d=n-2}: $\frac{15}{13}$& \ref{d=n-1}: $\frac{9}{8}$\\
10 & \ref{d=1}: 10 &  && & \ref{d=n-5}: $\frac{19}{15}$ &  \ref{d=n-4a}: $\frac{6}{5}$& \ref{d=n-3}: $\frac{7}{6}$& \ref{d=n-2}: $\frac{17}{15}$& \ref{d=n-1}: $\frac{10}{9}$ \\
\hline
\end{tabular}
\end{center}
\caption{\PHF s with few rows from Lemmas \ref{dgen}--\ref{5-2}. For each case, the number of the relevant lemma, and the asymptotic ratio of $\frac{k}{v}$ achieved, is given}\label{ptable}
\end{table}

\section{Existence tables}

In order to assess the impact of using Blackburn's construction for perfect hash families using fractal ingredients, we have created tables on the best-known upper bounds on  $\PHFN(k, v, t)$ for $k \le 10^9, v \le 2500$, and $3 \le t \le 11$ \cite{ryantables}.
These tables report on over 385,000 parameter situations.
Of those, 2,658 are improvements that result from the generalization of Blackburn's theorem.
Improvements were found only for larger strengths, in particular when $t \geq 6$.
We provide here a representative collection of improvements, restricting our attention to cases with  $N < t$ and $v \le 250$.
Each table considers a selection of $N$ and $t$;  then $k_{old}$ is the largest number of columns found without using the fractal version of the Blackburn construction, while $k_{fractal}$ gives the largest number of columns obtained using in addition fractal \PHHF s in the Blackburn construction.
In order to highlight the more significant improvements, we only report cases when $k_{fractal} \ge k_{old}+5$.

Naturally, other recursive constructions  can and do make further improvements, but we do not address them here.

\renewcommand{\arraycolsep}{5pt}
\renewcommand{\tabcolsep}{5pt}

\begin{table}[htbp]
	\centering
	\caption{$t$=6, $N=4$}
	\label{tbl:t_6.4}
\begin{tabular}{r|rrrrrrrr}
	$v$&121&127&163&166&169&211&217&\\
	$k_{fractal}$ &188&198&256&260&266&334&344&\\
	$k_{old}$&183&192&247&253&259&326&337&\\
\end{tabular}

	\centering
	\caption{$t$=6, $N=5$}
	\label{tbl:t_6.5}
\begin{tabular}{r|rrrrrrrrr}
\hline
	$v$&50&63&68&75&83&93&101&108&115\\
	$k_{fractal}$ &90&135&140&155&175&205&225&240&255\\
	$k_{old}$&82&104&113&125&139&172&197&216&223\\
\hline
	$v$&121&130&135&140&148&157&165&172&181\\
	$k_{fractal}$ &265&290&295&300&320&345&365&380&405\\
	$k_{old}$&229&254&259&264&272&281&292&303&313\\
\hline
	$v$&189&196&207&215&223&228&238&246\\
	$k_{fractal}$ &425&440&475&495&515&520&550&570\\
	$k_{old}$&405&412&423&431&439&444&454&481\\
\end{tabular}
\end{table}

\begin{table}[htbp]
	\centering
	\caption{$t$=7, $N=5$}
	\label{tbl:t_7}
\begin{tabular}{r|rrrrrrrrr}
\hline
	$v$&78&80&81&82&114&115&123&124&127\\
	$k_{fractal}$ &116&120&121&122&174&175&189&190&194\\
	$k_{old}$&111&114&115&117&169&170&183&185&189\\
\hline
	$v$&128&129&130&131&133&134&135&136&137\\
	$k_{fractal}$ &196&198&201&202&204&205&207&208&209\\
	$k_{old}$&190&191&192&193&195&196&197&198&199\\
\hline
	$v$&138&139&141&142&143&144&146&148&149\\
	$k_{fractal}$ &210&214&217&218&219&224&226&228&233\\
	$k_{old}$&200&202&204&207&211&216&211&213&217\\
\hline
	$v$&150&154&155&156&157&158&159&161&162\\
	$k_{fractal}$ &234&238&239&240&242&244&246&248&252\\
	$k_{old}$&222&226&227&228&229&230&231&237&242\\
\hline
	$v$&164&167&168&169&170&171&172&175&177\\
	$k_{fractal}$ &257&260&264&265&266&268&269&273&276\\
	$k_{old}$&248&251&252&253&254&255&256&259&261\\
\hline
	$v$&179&182&183&184&185&186&187&188&189\\
	$k_{fractal}$ &281&284&287&288&289&290&292&294&297\\
	$k_{old}$&266&269&270&272&274&275&277&279&281\\
\hline
	$v$&190&191&192&193&194&195&196&197&201\\
	$k_{fractal}$ &298&299&304&305&306&307&308&312&316\\
	$k_{old}$&283&285&287&289&290&281&292&293&308\\
\hline
	$v$&203&204&206&208&209&210&211&212&213\\
	$k_{fractal}$ &318&320&326&328&329&333&334&335&336\\
	$k_{old}$&313&314&316&318&318&320&321&322&323\\
\hline
	$v$&214&216&217&219&221&222&223&224&226\\
	$k_{fractal}$ &340&343&344&346&348&349&350&354&360\\
	$k_{old}$&324&326&327&329&331&333&335&337&341\\
\hline
	$v$&227&229&230&231&232&233&234&235&236\\
	$k_{fractal}$ &362&365&366&367&368&369&273&377&380\\
	$k_{old}$&343&347&349&351&353&355&357&359&369\\
\hline
	$v$&242&243&244&245&246&247&248&249\\
	$k_{fractal}$ &386&387&388&389&389&391&393&398\\
	$k_{old}$&372&375&376&378&380&382&384&386\\
\end{tabular}
\end{table}

\begin{table}[htbp]
	\centering
	\caption{$t$=8, $N=6$}
	\label{tbl:t_8}
\begin{tabular}{r|rrrrrrrr}
	$v$&108&120&135&158&174&184&195&207\\
	$k_{fractal}$ &162&192&216&240&270&288&300&324\\
	$k_{old}$&156&175&195&233&258&271&291&308\\
\hline
	$v$&218&227&240\\
	$k_{fractal}$ &336&360&384\\
	$k_{old}$&330&339&360\\
\end{tabular}
\end{table}

\begin{table}[htbp]
	\centering
	\caption{$t$=9, $N=6$}
	\label{tbl:t_9}
\begin{tabular}{r|rrrrrrrr}
	$v$&173&181&191&194&231&236&239\\
	$k_{fractal}$ &240&252&264&270&324&330&336\\
	$k_{old}$&234&244&256&259&315&320&323\\
\end{tabular}

	\centering
	\caption{$t$=10, $N=7$}
	\label{tbl:t_10}
\begin{tabular}{r|rrrrrrrr}
	$v$&191&215&239\\
	$k_{fractal}$ &215&298&323\\
	$k_{old}$&253&280&318\\
\end{tabular}

	\centering
	\caption{$t$=11, $N=7$}
	\label{tbl:t_11}
\begin{tabular}{r|rrrrrrrr}
	$v$&143&179&191&209&239\\
	$k_{fractal}$ &183&230&245&269&308\\
	$k_{old}$&178&224&239&264&300\\
\end{tabular}
\end{table}

\section{Concluding Remarks}

The use of fractal \DHHF s and \PHHF s in the Blackburn method leads to many constructions for \DHHF s with a number of rows less than the strength.
Our motivation for seeking these improvements has been to improve bounds for covering arrays.
Many  improvements are reported in the online covering array tables \cite{catables}.
We make no effort  to enumerate them here, contenting ourselves to mention a few illustrative examples.

Renaming symbols in each column of a covering array, we can always produce at least one constant row.
Then by Theorem \ref{hetgen}, the existence of a \PHF$(4;260,166,6)$ and a \CA$(N;6,166,v)$ ensures that a \CA$(4N-4;6,260,v)$ exists.
This yields the smallest covering array for these parameters when $v \in \{7,8,9,11,12,13\}$.
Because a  \PHHF$(2;ab,(a,b),2)$ exists whenever $a,b \geq 1$,  a \PHHF$(2;55,(7,8),2)$ exists.
Then using Lemma \ref{d=n-2}(i) with $k=95$, there is a \PHHF$(4;260,165^3 166^1,6)$.
Hence by Theorem \ref{hetgen}.  if a \CA$(N;6,165,v)$ and a \CA$(N';6,166,v)$ both exist, a \CA$(3N+N'-4;6,260,v)$ exists.
This illustrates how the use of heterogeneous hash families can reduce the number of rows in the covering array produced.

Using Lemma \ref{d=1} with a fractal \PHF$(4;81,25,4)$ (found by the method of \cite{PHFdens}) yields a \PHF$(5;405,181,6)$.
Then by Theorem \ref{hetgen}, the existence of  a \CA$(N;6,181,v)$ ensures that a \CA$(5N-5;6,405,v)$ exists.
This yields the smallest covering array for these parameters when $v \in \{5,7,8,9,11,13,18,19\}$.

Extending the Blackburn method to fractal and heterogeneous hash families therefore improves on known constructions for covering arrays even within the ranges currently tabulated at \cite{catables}.
To see that the extension to distributing hash families is also effective, we consider larger strengths.
We use the framework of Lemma \ref{d=n-5}, taking $\kappa = 10$.
According to \cite{PHFdens}, a \PHF$(4;10,6,4)$ exists, and it can be easily verified that one is fractal.
Then a \PHHF$(7;70,54,9)$, and hence a  \DHHF$(7;70,54,9,p)$,  exists for all $2 \leq p \leq 9$.
Using instead the \DHF(4;10,4,4,2) from Table \ref{phfdhfex} in the construction of Lemma \ref{d=n-5}, we produce a \DHF(7;70,46,9,2), using many fewer symbols.
When used in a column replacement strategy for covering arrays, this enables us to use a binary covering array with 46 columns rather than 54, which can be a substantial improvement.

\section*{Bibliography}

\def\cprime{$'$}

\end{document}